\theoremstyle{thmstyleone}%
\newtheorem{theorem}{Theorem}[section]
\numberwithin{equation}{section}
\newtheorem{proposition}[theorem]{Proposition}%
\theoremstyle{thmstyletwo}%
\newtheorem{remark}[theorem]{Remark}%
\theoremstyle{thmstylethree}%
\newtheorem{definition}[theorem]{Definition}%
\newtheorem{corollary}[theorem]{Corollary}
\begin{document}

    \title[Intermediate topological pressures]{Intermediate topological pressures and variational principles for nonautonomous dynamical systems}


\author[1]{\fnm{Yujun} \sur{Ju}} \email{yjju@ctbu.edu.cn}

\affil[1]{\orgdiv{School of Mathematics and Statistics}, \orgname{Chongqing Technology and Business University}, \orgaddress{\city{Chongqing}, \postcode{400067}, \country{People's Republic of China}}}


\abstract{
We introduce a one-parameter family of intermediate topological pressures for nonautonomous dynamical systems that interpolates between the Pesin-Pitskel topological pressure and the lower and upper capacity topological pressures.
The construction is based on the Carath\'eodory-Pesin structure in which all admissible strings in a covering satisfy $N \le n < N/\theta + 1$, where $\theta \in [0,1]$ is a parameter. 
The extremal cases $\theta=0$ and $\theta=1$ recover the Pesin-Pitskel pressure and the two capacity pressures, respectively.
We first investigate several properties of the intermediate pressure, including continuity on $(0,1]$ and possible discontinuity at $\theta=0$, as well as the power rule and monotonicity.
We then derive inequalities for intermediate pressures with respect to a factor map. Finally, we introduce intermediate measure-theoretic pressures and establish variational principles relating them to the corresponding topological pressures.}

\keywords{Intermediate topological pressure, Nonautonomous dynamical system, Variational principle, Carath\'eodory-Pesin structure}

\pacs[MSC Classification]{37B40, 37B55, 28A80}

\maketitle

\section{Introduction}
Topological entropy is one of the most fundamental invariants in dynamical systems, measuring the exponential growth rate of distinguishable orbits.
It was first introduced by Adler, Konheim and McAndrew \cite{adler1965topological} through open covers.
Later, Bowen \cite{bowen1971entropy} and Dinaburg \cite{dinaburg1970correlation} provided equivalent definitions based on spanning and separated sets, in a way parallel to the definition of the (upper) box dimension.
Bowen subsequently introduced the notion of topological entropy on subsets \cite{bowen1973topological}, using a construction that resembles the definition of the Hausdorff dimension.
Pesin \cite{pesin1997dimension} developed a refinement of the classical Carath\'eodory construction, now generally known as the Carath\'eodory-Pesin structure.
This framework has become a central tool in the study of dynamical systems and dimension theory.
It gives a unified way to describe the Hausdorff and box dimensions, topological entropy and topological pressure for non-compact sets.
Feng and Huang \cite{feng2012variational} introduced the packing topological entropy as the dynamical analogue of the packing dimension, and proved variational principles for both Bowen and packing topological entropies.
These developments illustrate the close connections between fractal dimensions and topological entropies.

The classical topological entropy for nonautonomous dynamical systems (NDSs for short) was first introduced and studied by Kolyada and Snoha \cite{kolyada1996topological}.
Li \cite{li2015remarks} subsequently used the Carath\'eodory-Pesin structure to define the Pesin topological entropy of NDSs on non-compact sets and provided a condition under which it coincides with the classical one on the whole space. Li and Ye \cite{li2023comparison} later
obtained another criterion for this equality by showing that the Pesin and classical entropies agree whenever the system is weakly mixing.
Bi\'s \cite{bis2018topological} employed the Carath\'eodory-Pesin structure to define the upper capacity topological entropy for NDSs on non-compact sets and proved that it agrees with the classical entropy on every subset.  
Variational principles for Bowen and packing topological entropies of NDSs were obtained by Xu and Zhou \cite{xu2018variational} and by Zhang and Zhu \cite{zhang2023variational}. 
Along a different line of development, several finer invariants have been 
introduced to distinguish NDSs with zero topological entropy, 
including topological entropy dimension \cite{kuang2013fractal, li2019topological, li2025dimension}, 
polynomial entropy \cite{liu2022polynomial} and topological sequence entropy 
\cite{shao2026topological}.
In addition, mean dimension and metric mean dimension were extended to NDSs in 
\cite{rodrigues2022mean}, providing tools for classifying systems with infinite 
topological entropy.

Falconer, Fraser and Kempton \cite{falconer2020intermediate} introduced the intermediate dimensions, a one-parameter family of dimensions depending on $\theta \in [0,1]$.
The construction is based on coverings whose diameters are restricted to the interval $[\delta^{1/\theta}, \delta]$, so that the Hausdorff and box dimensions appear as the extreme cases when $\theta = 0$ and $\theta = 1$. Intermediate dimensions enjoy several useful properties. 
They are continuous on $(0,1]$ (though not necessarily at $0$), 
satisfy analogues of the mass distribution principle, Frostman's lemma and product formulas, and provide insight into the distribution of covering scales for sets whose Hausdorff and box dimensions differ, 
offering a refined description of geometric complexity. Motivated by this work, we recently introduced the lower and upper $\theta$-intermediate topological entropies for NDSs and studied their dependence on the parameter $\theta$ \cite{ju2026entropy}.  
The intermediate entropies mirror the continuity behaviour of the intermediate 
dimensions: they are continuous on $(0,1]$ but may fail to be continuous at $0$.  
An explicit example demonstrating discontinuity at $0$ was also obtained. Liu, Selmi and Li \cite{liu2026intermediate} considered a related notion of $\theta$-intermediate topological entropy for autonomous dynamical systems, where admissible string lengths satisfy $N \le n \le N^{1/\theta}$, leading to a different interpolation scheme.

Topological pressure was introduced by Ruelle \cite{ruelle1973statistical} 
and later studied for continuous maps on compact spaces by Walters 
\cite{walters1982ergodic}.  
It extends topological entropy and plays a central role in the thermodynamic 
formalism, providing a finer description of dynamical complexity by 
incorporating potential functions.
Pesin and Pitskel \cite{pesin1984topological} extended Bowen’s subset entropy to a corresponding notion of topological pressure on subsets. This notion is also referred to as Pesin-Pitskel topological pressure (or Bowen topological pressure).
Tang, Cheng and Zhao \cite{tang2015variational} and Zhong and Chen \cite{zhong2023variational} extended the work of Feng and Huang and established variational principles for the Pesin-Pitskel topological pressure and the packing topological pressure, respectively.
For NDSs, variational principles for Pesin-Pitskel and packing topological pressures were obtained by Nazarian Sarkooh \cite{nazarian2024variational} and Li \cite{li2024variational}, respectively.
More recently, Chen and Miao \cite{chen2025nonautonomous, chen2025nonautonomousII} carried out a detailed and systematic study of various topological entropies and pressures for more general nonautonomous systems in which both the state spaces and the potentials vary with time, and established the corresponding variational principles in this more general setting.

These results naturally lead to the question of whether one can construct a 
family of topological pressures, depending on a parameter $\theta$, that 
interpolates between the Pesin-Pitskel pressure and the capacity pressures 
in a way analogous to the intermediate dimensions and intermediate entropies.  
Motivated by this question, we introduce the lower and upper 
$\theta$-intermediate topological pressures for NDSs, which interpolate between the Pesin-Pitskel topological pressure and the lower and upper capacity topological pressures. We establish fundamental properties of these pressures, including continuity with respect to $\theta$ on $(0,1]$, the power rule, monotonicity and their behaviour under factor maps. 
We also introduce the corresponding $\theta$-intermediate measure-theoretic 
pressures and prove variational principles that relate them to the 
associated topological pressures.

The paper is organized as follows. 
In Section~\ref{sec:definition}, we introduce the lower and upper $\theta$-intermediate topological pressures for NDSs, provide two equivalent definitions and establish quantitative inequalities that extend the continuity estimates previously obtained for intermediate topological entropies. 
In Section~\ref{sec:properties}, we discuss several fundamental properties of the 
$\theta$-intermediate topological pressures, including closure stability,  the power rule and monotonicity.
In Section~\ref{sec:factor}, we study the relations between the $\theta$-intermediate 
topological pressures of two topologically semiconjugate systems and 
obtain inequality formulas for $\theta$-intermediate pressures via a factor map.
In Section~\ref{sec:variational}, we introduce intermediate measure-theoretic pressures and prove variational principles relating them to the corresponding topological pressures.

\section{Intermediate topological pressures: definition and basic properties}\label{sec:definition}

Let \( (X,d) \) be a compact metric space and let \( \boldsymbol{f}= \{f_i\}_{i=1}^\infty \) be a sequence of continuous self-maps of \( X \). 
Denote by \(\mathbb{N}\) the set of positive integers and by \(\mathbb{R}\) the set of real numbers.
For each \( i \in \mathbb{N} \), set \( f_i^0 = \text{id}_X \), the identity map on \( X \), and for each \( n \in \mathbb{N} \) define
\[
f_i^n = f_{i+(n - 1)} \circ \cdots \circ f_{i+1} \circ f_i, \quad f_i^{-n} = \left(f_i^n\right)^{-1} = f_i^{-1} \circ f_{i+1}^{-1} \circ \cdots \circ f_{i+(n - 1)}^{-1}.
\]
The notation \( f_i^{-n} \) is used for preimages of sets, although we do not assume that the maps \( f_i \) are invertible.
Then we call \( (X, \boldsymbol{f}) \) a nonautonomous dynamical system (NDS for short).
Finally, denote by $\boldsymbol{f}^n$ the sequence of maps $\{f_{in+1}^{n}\}_{i=0}^\infty$ and by $\boldsymbol{f}_n$ the sequence $\{f_{i}\}_{i=n}^\infty$. 
Set
\[
d_n(x,y)=\max_{0\le j\le n-1}d\bigl(f_1^j(x),f_1^j(y)\bigr),\qquad x,y\in X.
\]
Since \(X\) is compact, \(d_n\) is a metric equivalent to \(d\).  
Given $\varepsilon>0$ and $x\in X$, the $(n,\varepsilon)$-Bowen ball is
\[
B_n(x,\varepsilon)= \left\{y\in X : d_n(x,y)<\varepsilon \right\}.
\]

In what follows, we recall the definition of the topological pressure of an NDS on a nonempty subset, using spanning sets and separated sets \cite{huang2008pressure,kong2015topological, li2022notes}.

Let $Z$ be a nonempty subset of $X$. The set $Z$ may not be compact and may not exhibit any kind of invariancy with respect to $\boldsymbol{f}$. 
A set \(E\subseteq X\) is an \((n,\varepsilon)\)-spanning set of \(Z\) if for every \(y\in Z\) there exists \(x\in E\) with \(d_n(x,y)\le\varepsilon\);
a set \(F\subseteq Z\) is an \((n,\varepsilon)\)-separated set of \(Z\) if \(x\neq y\) in \(F\) implies \(d_n(x,y)>\varepsilon\).
Let $C(X, \mathbb{R})$ denote the Banach space of all continuous real-valued functions on $X$ equipped with the supremum norm.
Given $\varphi \in C(X, \mathbb{R})$, we define
\[
S_n^{\boldsymbol{f}}\varphi(x):=\sum_{j=0}^{n-1}\varphi\bigl(f_1^j x\bigr), \qquad x\in X.
\]
For simplicity, we write \(S_n\varphi(x)\) instead of \(S_n^{\boldsymbol f}\varphi(x)\) whenever no confusion arises.
For \(t>0\), we also denote the modulus of continuity of \(\varphi\) by
\[
\omega_\varphi(t)
:=
\sup\{|\varphi(x)-\varphi(y)|: d(x,y)<t\}.
\]
Since \(X\) is compact and \(\varphi\) is continuous, we have \(\omega_\varphi(t)\to0\) as $t\to0$.
When the potential is fixed, we write \(\omega(t)\) for \(\omega_\varphi(t)\).

For any $\varepsilon > 0$, define
\[
Q_n(\boldsymbol{f}, Z, \varphi, \varepsilon) = \inf \left\{ \sum_{x \in E} e^{S_n \varphi(x)} : E \text{ is an } (n, \varepsilon)\text{-spanning set for } Z \right\},
\]
\[
P_n(\boldsymbol{f}, Z, \varphi, \varepsilon) = \sup \left\{ \sum_{x \in F} e^{S_n \varphi(x)} : F \text{ is an } (n, \varepsilon)\text{-separated set of } Z \right\}.
\]
Then set
\[
Q(\boldsymbol{f}, Z, \varphi, \varepsilon) = \limsup_{n \to \infty} \frac{1}{n} \log Q_n(\boldsymbol{f}, Z, \varphi, \varepsilon),
\]
\[
P(\boldsymbol{f}, Z, \varphi, \varepsilon) = \limsup_{n \to \infty} \frac{1}{n} \log P_n(\boldsymbol{f},Z, \varphi, \varepsilon).
\]
\begin{definition}
Let \(Z\subseteq X\) be nonempty. The \textit{classical topological pressure} of the function \( \varphi \) on the set \( Z \) with respect to \( \boldsymbol{f} \) is given by
\[
P(\boldsymbol{f}, Z, \varphi) = \lim_{\varepsilon \to 0} Q(\boldsymbol{f}, Z, \varphi, \varepsilon) = \lim_{\varepsilon \to 0} P(\boldsymbol{f}, Z, \varphi, \varepsilon). 
\]
In particular, when \( \varphi \equiv 0 \), this reduces to the classical topological entropy for NDSs:
\(
h(\boldsymbol{f}, Z) = P(\boldsymbol{f},Z, 0),
\)
as introduced by Kolyada and Snoha \cite{kolyada1996topological}.
\end{definition}

\subsection{Intermediate topological pressures of NDSs}
Let $(X,\boldsymbol{f})$ be an NDS on a compact metric space $(X,d)$.
For a finite open cover $\mathcal{U}$ of $X$ and \(m \in \mathbb{N}\), let
\[
\mathcal{S}_{m}(\mathcal{U}):=\left\{\mathbf{U}=(U_{0},U_{1},\ldots,U_{m-1}):\mathbf{U}\in \mathcal{U}^{m}\right\},
\]
where $\mathcal{U}^{m}=\prod\limits_{i=1}^{m}\mathcal{U}$. For any string $\mathbf{U}\in \mathcal{S}_{m}(\mathcal{U})$, define the length of $\mathbf{U}$ to be $m(\mathbf{U}):=m$. We put \(\mathcal{S}=\mathcal{S}(\mathcal{U})=\bigcup_{m \in\mathbb N}\mathcal{S}_{m}(\mathcal{U})\).
If $k \in \mathbb{N}$ with $1 \le k \le m(\mathbf{U})$ and $0 \le a \le m(\mathbf{U}) - k$, we denote by
\[
\mathbf U|_{[a,a+k-1]} := (U_a,U_{a+1},\ldots,U_{a+k-1}) \in \mathcal S_k(\mathcal U)
\]
the substring of $\mathbf U$ of length $k$ starting at position $a$. 
In particular, $\mathbf U|_{[0,k-1]}$ is the initial truncation of length $k$. 

For a given string $\mathbf{U}=(U_0,U_1,\ldots,U_{m-1}) \in \mathcal{S}_{m}(\mathcal{U})$, we associate the set
\[
 X_{\boldsymbol{f}}(\mathbf{U})=\left\{x\in X : f_1^{j}(x)\in U_j, j=0,1,\ldots,m-1\right\}.
\]
When no confusion arises we simply write \(X(\mathbf U)\) for \(X_{\boldsymbol{f}}(\mathbf U)\),
and likewise omit the subscript \(\boldsymbol{f}\) from the quantities \(M\), \(\underline m\), \(\overline m\) defined below.
Let \(\varphi \in C(X,\mathbb{R})\).
For any subset $Z\subseteq X, \alpha \in \mathbb{R}$ and $\theta\in[0,1]$, define
\[
\underline{m}(Z,\alpha,\varphi,\mathcal{U},\theta)=\liminf_{N\to\infty}M(Z,\alpha,\varphi,\mathcal{U},N,\theta),
\]
\[
\overline{m}(Z,\alpha,\varphi,\mathcal{U},\theta)=\limsup_{N\to\infty}M(Z,\alpha,\varphi,\mathcal{U},N,\theta),
\]
where
\[
M(Z,\alpha,\varphi,\mathcal{U},N,\theta):=\inf_{\mathcal{G}}\left\{\sum\limits_{\mathbf{U} \in \mathcal{G}}\exp\biggl(-\alpha m(\mathbf{U})+\sup_{x \in X(\mathbf U)}S_{m(\mathbf{U})}\varphi(x)\biggr)\right\}
\]
and the infimum is taken over all finite or countable collections of strings $\mathcal{G} \subseteq \bigcup_{N \leq m < N/\theta+1} \mathcal{S}_{m}(\mathcal{U})$ such that $ \mathcal{G}$ covers $Z$ $\left({\rm i.e.,} \bigcup_{\mathbf{U}\in \mathcal{G}}X(\mathbf U)\supseteq Z\right)$. Here and throughout the paper, we adopt the convention that
\[
\sup_{x\in X(\mathbf U)} S_{m(\mathbf U)}\varphi(x)=-\infty
\quad \text{whenever } X(\mathbf U)=\emptyset.
\]

It is straightforward to verify that the critical values of 
$\underline{m}(Z,\alpha,\varphi,\mathcal{U},\theta)$ and 
$\overline{m}(Z,\alpha,\varphi,\mathcal{U},\theta)$ exist. We define
\[
\underline{P}(\boldsymbol{f},Z,\varphi,\mathcal{U},\theta)
:=\inf\{\alpha: \underline{m}(Z,\alpha,\varphi,\mathcal{U},\theta)=0\}
=\sup\{\alpha: \underline{m}(Z,\alpha,\varphi,\mathcal{U},\theta)=\infty\},
\]
\[
\overline{P}(\boldsymbol{f},Z,\varphi,\mathcal{U},\theta)
:=\inf\{\alpha: \overline{m}(Z,\alpha,\varphi,\mathcal{U},\theta)=0\}
=\sup\{\alpha: \overline{m}(Z,\alpha,\varphi,\mathcal{U},\theta)=\infty\}.
\]

For an open cover $\mathcal{U}$ of $X$, let $|\mathcal U|= \max\{\operatorname{diam}(U) : U \in \mathcal{U}\}$. 

\begin{theorem}
For any nonempty subset $Z\subseteq X$, the following limits exist.
\[
\underline{P}(\boldsymbol{f},Z,\varphi,\theta)
=\lim_{|\mathcal U|\to 0}\,\underline{P}(\boldsymbol{f},Z,\varphi,\mathcal U,\theta),\]
\[
\overline{P}(\boldsymbol{f},Z,\varphi,\theta)
=\lim_{|\mathcal U|\to 0}\,\overline{P}(\boldsymbol{f},Z,\varphi,\mathcal U,\theta).
\]
\end{theorem}

\begin{proof}
We follow the classical argument of Pesin~\cite{pesin1997dimension}.
Let $\mathcal U$ and $\mathcal V$ be finite open covers of $X$ such that 
$|\mathcal V|$ is smaller than the Lebesgue number of $\mathcal U$. 
Then for every $V \in \mathcal V$ there exists $U(V)\in\mathcal U$ with $V\subseteq U(V)$. For any string 
\[
\mathbf V=(V_0,\ldots,V_{m-1})\in\mathcal S_m(\mathcal V),
\]
define the associated string
\[
\mathbf U(\mathbf V):=(U(V_0),\ldots,U(V_{m-1}))\in\mathcal S_m(\mathcal U).
\]
If $\mathcal G\subseteq \mathcal S(\mathcal V)$ covers $Z$, then
\(
\mathbf U(\mathcal G):=\{\mathbf U(\mathbf V):\mathbf V\in\mathcal G\}
\subseteq \mathcal S(\mathcal U)
\)
also covers $Z$. Without loss of generality, we may assume that $X(\mathbf V)\neq \emptyset$ for all $\mathbf V\in\mathcal G$. For each $\mathbf V\in\mathcal G$, fix $y_{\mathbf V}\in X(\mathbf V)$. Let
\[
\gamma=\gamma(\mathcal U):=
\sup\{|\varphi(x)-\varphi(y)|:x,y\in U \text{ for some } U\in\mathcal U\}.
\]
Then for any $x\in X(\mathbf U(\mathbf V))$, we have
\[
|S_m\varphi(x)-S_m\varphi(y_{\mathbf V})|\le m\gamma,
\]
and hence
\[
\sup_{x\in X(\mathbf U(\mathbf V))} S_m\varphi(x)
\le \sup_{y\in X(\mathbf V)} S_m\varphi(y)+m\gamma.
\]
Using the definition of $M(Z,\alpha,\varphi,\mathcal U,N,\theta)$, we obtain
\[
M(Z,\alpha,\varphi,\mathcal U,N,\theta)
\le M(Z,\alpha-\gamma,\varphi,\mathcal V,N,\theta)
\]
for all $\alpha\in\mathbb R$ and $N\in\mathbb N$. Consequently,
\[
\underline P(\boldsymbol f,Z,\varphi,\mathcal U,\theta)-\gamma
\le \underline P(\boldsymbol f,Z,\varphi,\mathcal V,\theta).
\]
Since $X$ is compact, it admits open covers of arbitrarily small diameter. Thus,
\[
\underline P(\boldsymbol f,Z,\varphi,\mathcal U,\theta)-\gamma
\le \liminf_{|\mathcal V|\to0}
\underline P(\boldsymbol f,Z,\varphi,\mathcal V,\theta).
\]
Letting $|\mathcal U|\to0$ yields $\gamma(\mathcal U)\to0$, and hence
\[
\limsup_{|\mathcal U|\to0}
\underline P(\boldsymbol f,Z,\varphi,\mathcal U,\theta)
\le
\liminf_{|\mathcal V|\to0}
\underline P(\boldsymbol f,Z,\varphi,\mathcal V,\theta).
\]
This proves the existence of the first limit. The second limit can be obtained by an analogous argument.
\end{proof}

We call the quantities 
\(\underline{P}(\boldsymbol{f},Z,\varphi,\theta)\) and 
\(\overline{P}(\boldsymbol{f},Z,\varphi,\theta)\) 
the \emph{lower} and \emph{upper \(\theta\)-intermediate topological pressures} 
of the function \(\varphi\) on the set \(Z\) with respect to \(\boldsymbol{f}\). 
If these two values coincide, we refer to the common value as the 
\emph{\(\theta\)-intermediate topological pressure} and denote it by 
\(P(\boldsymbol{f},Z,\varphi,\theta)\). In particular, when $\varphi \equiv 0$, the lower and upper $\theta$-intermediate topological pressures reduce to the lower and upper $\theta$-intermediate topological entropies on $Z$, which we denote by
\(
\underline{h}_{\mathrm{top}}(\boldsymbol{f},Z,\theta)\) and
\(\overline{h}_{\mathrm{top}}(\boldsymbol{f},Z,\theta)
\)
respectively. If these two values coincide, we write
\(
h_{\mathrm{top}}(\boldsymbol{f},Z,\theta)
\)
for the common $\theta$-intermediate topological entropy on $Z$.

\begin{remark}
(i)  When \(\theta=0\), the admissible strings satisfy \(m(\mathbf U)\ge N\). 
In this case, \(M(Z,\alpha,\varphi,\mathcal U,N,0)\) is non-decreasing with respect to \(N\), and hence
\[
\underline{P}(\boldsymbol{f},Z,\varphi,0)
=\overline{P}(\boldsymbol{f},Z,\varphi,0).
\]
Following \cite{nazarian2024variational}, we call this the \emph{Pesin-Pitskel topological pressure} and denote the common value by \(P^{B}(\boldsymbol{f},Z,\varphi)\). Furthermore, when the potential function \(\varphi \equiv 0\), 
the Pesin-Pitskel topological pressure \(P^{B}(\boldsymbol{f},Z,\varphi)\) 
reduces to 
$h_{\mathrm{top}}^{B}(\boldsymbol{f},Z)=P^{B}(\boldsymbol{f},Z,0),$
which is referred to as the \emph{Pesin topological entropy}, first introduced by Li \cite{li2015remarks}.

(ii) When $\theta=1$, the admissible strings satisfy \(m(\mathbf U)=N\). We recall that for a finite open cover $\mathcal{U}$ of $X$, the following identities hold:
\begin{equation}\label{eq:pressure-liminf-limsup}
\begin{aligned}
\underline{P}(\boldsymbol{f},Z,\varphi,\mathcal{U},1)
&= \liminf_{N\to\infty} \frac{1}{N} 
\log \Lambda(Z,\varphi,\mathcal{U},N),\\[4pt]
\overline{P}(\boldsymbol{f},Z,\varphi,\mathcal{U},1)
&= \limsup_{N\to\infty} \frac{1}{N} 
\log \Lambda(Z,\varphi,\mathcal{U},N),
\end{aligned}
\end{equation}
where
\[
\Lambda(Z,\varphi,\mathcal{U},N)
:= \inf_{\mathcal{G}} 
\left\{
\sum_{\mathbf{U}\in \mathcal{G}} 
\exp\!\left(\sup_{x\in X(\mathbf{U})} S_N \varphi(x)\right)
\right\},
\]
and the infimum is taken over all finite or countable 
$\mathcal{G} \subseteq \mathcal{S}_N(\mathcal{U})$ such that \(\mathcal{G}\) covers \(Z\) (cf. Yang and Huang~\cite[Lemma~3.5]{yang2025topological}).
Consequently, the lower and upper \emph{capacity topological pressures} of $\varphi$ on $Z$ (with respect to $\boldsymbol{f}$) are defined as
\[
\underline{CP}(\boldsymbol{f},Z,\varphi)
:= \underline{P}(\boldsymbol{f},Z,\varphi,1),
\qquad
\overline{CP}(\boldsymbol{f},Z,\varphi)
:= \overline{P}(\boldsymbol{f},Z,\varphi,1).
\]
Moreover, it was shown in \cite[Theorem~3.4]{li2024variational} that
\[
\overline{CP}(\boldsymbol{f},Z,\varphi)
=P(\boldsymbol{f},Z,\varphi),
\]
where $P(\boldsymbol{f},Z,\varphi)$ is the classical topological pressure.
\end{remark}

For \(\theta\in(0,1]\), if we restrict the admissible lengths of strings in the definition of
\(M(Z,\alpha,\varphi,\mathcal U,N,\theta)\) to the interval \([N,\,N/\theta]\),
then we obtain new functions \(M^{*}, \underline{m}^{*}\) and \(\overline{m}^{*}\),
and denote the corresponding critical values respectively by
\[
\underline{P}^{*}(\boldsymbol f,Z,\varphi,\mathcal U,\theta)
\quad\text{and}\quad
\overline{P}^{*}(\boldsymbol f,Z,\varphi,\mathcal U,\theta),
\]
which simplifies later computations by avoiding additional adjustments.

\begin{theorem}\label{thm:window-equivalence-nds}
Let $(X,d)$ be a compact metric space, $\boldsymbol f=\{f_n\}_{n=1}^\infty$ 
a sequence of continuous self-maps of $X$, $\mathcal U$ a finite open cover of $X$, 
and $\varphi\in C(X,\mathbb R)$. 
Then for any nonempty $Z\subseteq X$ and $\theta\in(0,1]$, one has
\[
\underline{P}^{*}(\boldsymbol f,Z,\varphi,\mathcal U,\theta)
=
\underline{P}(\boldsymbol f,Z,\varphi,\mathcal U,\theta),
\qquad
\overline{P}^{*}(\boldsymbol f,Z,\varphi,\mathcal U,\theta)
=
\overline{P}(\boldsymbol f,Z,\varphi,\mathcal U,\theta).
\]
Consequently,
\[
\underline{P}(\boldsymbol f,Z,\varphi,\theta)
=
\lim_{|\mathcal U|\to 0}
\underline{P}^{*}(\boldsymbol f,Z,\varphi,\mathcal U,\theta),
\qquad
\overline{P}(\boldsymbol f,Z,\varphi,\theta)
=
\lim_{|\mathcal U|\to 0}
\overline{P}^{*}(\boldsymbol f,Z,\varphi,\mathcal U,\theta).
\]
\end{theorem}

\begin{proof}
Fix \(\alpha\in\mathbb R\).
The case \(\theta=1\) is trivial, so we assume \(\theta\in(0,1)\).
Since the interval \([N,N/\theta]\) is contained in \([N,N/\theta+1)\), we have
\begin{equation}\label{eq:nds-window-1}
M(Z,\alpha,\varphi,\mathcal U,N,\theta)
\le
M^{*}(Z,\alpha,\varphi,\mathcal U,N,\theta).
\end{equation}
Set
$C(\alpha):=\#(\mathcal U)\,e^{|\alpha|+\|\varphi\|}.$
We claim that
\begin{equation}\label{eq:nds-window-2}
M^{*}(Z,\alpha,\varphi,\mathcal U,N+1,\theta)
\le
C(\alpha)\,
M(Z,\alpha,\varphi,\mathcal U,N,\theta).
\end{equation}
To prove \eqref{eq:nds-window-2}, take any
\[
\mathcal G\subseteq \bigcup_{N\le m< N/\theta+1}\mathcal S_m(\mathcal U)
\]
covering \(Z\). Write
$\mathcal G=\mathcal G_0\cup \mathcal G_1,$
where
\[
\mathcal G_0:=\{\mathbf W\in\mathcal G:\ m(\mathbf W)=N\},
\qquad
\mathcal G_1:=\{\mathbf W\in\mathcal G:\ m(\mathbf W)>N\}.
\]
For each \(\mathbf W\in\mathcal G_0\), consider its one-step extensions
\[
\mathcal E(\mathbf W):=\{\mathbf WU:\ U\in\mathcal U\}\subset \mathcal S_{N+1}(\mathcal U),
\]
and define
\[
\mathcal H:=\mathcal G_1\ \cup\ \bigcup_{\mathbf W\in\mathcal G_0}\mathcal E(\mathbf W).
\]
Then \(\mathcal H\) still covers \(Z\), since for every \(\mathbf W\in\mathcal S_N(\mathcal U)\),
\[
X(\mathbf W)=\bigcup_{U\in\mathcal U}X(\mathbf WU).
\]
Moreover, every \(\mathbf V\in\mathcal H\) satisfies
\[
N+1\le m(\mathbf V)<\frac{N}{\theta}+1
\le \frac{N+1}{\theta}.
\]
Hence \(\mathcal H\) is admissible for
\(M^{*}(Z,\alpha,\varphi,\mathcal U,N+1,\theta)\).

Now let \(\mathbf W\in\mathcal G_0\) and \(U\in\mathcal U\). Since \(X(\mathbf WU)\subseteq X(\mathbf W)\),
for every \(x\in X(\mathbf WU)\) we have
\[
S_{N+1}\varphi(x)
=
S_N\varphi(x)+\varphi(f_1^N x)
\le
S_N\varphi(x)+\|\varphi\|.
\]
Therefore,
\[
\sup_{x\in X(\mathbf WU)}S_{N+1}\varphi(x)
\le
\sup_{x\in X(\mathbf W)}S_N\varphi(x)+\|\varphi\|.
\]
It follows that
\[
\exp\left(
-\alpha(N+1)+\sup_{x\in X(\mathbf WU)}S_{N+1}\varphi(x)
\right)
\le
e^{|\alpha|+\|\varphi\|}
\exp\left(
-\alpha N+\sup_{x\in X(\mathbf W)}S_N\varphi(x)
\right).
\]
Summing over \(U\in\mathcal U\), we obtain
\[
\sum_{U\in\mathcal U}
\exp\left(
-\alpha(N+1)+\sup_{x\in X(\mathbf WU)}S_{N+1}\varphi(x)
\right)
\le
C(\alpha)\,
\exp\left(
-\alpha N+\sup_{x\in X(\mathbf W)}S_N\varphi(x)
\right).
\]
Consequently,
\begin{align*}
&\sum_{\mathbf V\in\mathcal H}
\exp\Bigl(
-\alpha\,m(\mathbf V)
+\sup_{x\in X(\mathbf V)}S_{m(\mathbf V)}\varphi(x)
\Bigr) \\
&\le
C(\alpha)
\sum_{\mathbf W\in\mathcal G}
\exp\Bigl(
-\alpha\,m(\mathbf W)
+\sup_{x\in X(\mathbf W)}S_{m(\mathbf W)}\varphi(x)
\Bigr).
\end{align*}
Taking the infimum over all such \(\mathcal G\) proves \eqref{eq:nds-window-2}.
Combining \eqref{eq:nds-window-1} and \eqref{eq:nds-window-2}, we obtain
\[
M(Z,\alpha,\varphi,\mathcal U,N+1,\theta)
\le
M^{*}(Z,\alpha,\varphi,\mathcal U,N+1,\theta)
\le
C(\alpha)\,M(Z,\alpha,\varphi,\mathcal U,N,\theta).
\]
Taking \(\liminf_{N\to\infty}\), we get
\[
\underline m(Z,\alpha,\varphi,\mathcal U,\theta)
\le
\underline m^{*}(Z,\alpha,\varphi,\mathcal U,\theta)
\le
C(\alpha)\,\underline m(Z,\alpha,\varphi,\mathcal U,\theta),
\]
and similarly,
\[
\overline m(Z,\alpha,\varphi,\mathcal U,\theta)
\le
\overline m^{*}(Z,\alpha,\varphi,\mathcal U,\theta)
\le
C(\alpha)\,\overline m(Z,\alpha,\varphi,\mathcal U,\theta).
\]
It follows that
\[
\underline{P}^{*}(\boldsymbol f, Z,\varphi,\mathcal U,\theta)
=
\underline{P}(\boldsymbol f, Z,\varphi,\mathcal U,\theta),
\qquad
\overline{P}^{*}(\boldsymbol f,Z,\varphi,\mathcal U,\theta)
=
\overline{P}(\boldsymbol f,Z,\varphi,\mathcal U,\theta).
\]
Finally, letting \(|\mathcal U|\to 0\), we obtain
\[
\underline{P}(\boldsymbol f,Z,\varphi,\theta)
=
\lim_{|\mathcal U|\to 0}
\underline{P}^{*}(\boldsymbol f,Z,\varphi,\mathcal U,\theta),
\qquad
\overline{P}(\boldsymbol f,Z,\varphi,\theta)
=
\lim_{|\mathcal U|\to 0}
\overline{P}^{*}(\boldsymbol f,Z,\varphi,\mathcal U,\theta).
\]
\end{proof}

\begin{proposition}\label{prop:basic-properties-all}
Let $(X,\boldsymbol{f})$ be an NDS, $Z\subseteq X$ a nonempty set and $\mathcal{P}\in\{\underline{P},\overline{P}\}$.
Then for any $\varphi,\psi\in C(X,\mathbb{R})$ and $\theta\in[0,1]$, the following properties hold:

\begin{itemize}

\item[\textnormal{(1)}]
$\underline{P}(\boldsymbol{f},Z,0,\theta)=\underline{h}_{\mathrm{top}}(\boldsymbol{f},Z,\theta)$, $\overline{P}(\boldsymbol{f},Z,0,\theta)=\overline{h}_{\mathrm{top}}(\boldsymbol{f},Z,\theta)$.

\item[\textnormal{(2)}]
\(
\underline{P}(\boldsymbol{f},Z,\varphi,\theta)
\le
\overline{P}(\boldsymbol{f},Z,\varphi,\theta).
\)

\item[\textnormal{(3)}] 
If $Z_1\subseteq Z_2 \subseteq X$, then 
\(
\mathcal{P}(\boldsymbol{f},Z_1,\varphi,\theta)
\le
\mathcal{P}(\boldsymbol{f},Z_2,\varphi,\theta).
\)

\item[\textnormal{(4)}] 
If $Z=\bigcup_{i\ge1}Z_i$, then 
\[
\mathcal{P}(\boldsymbol{f},Z,\varphi,\theta)
\ge
\sup_{i\ge1}
\mathcal{P}(\boldsymbol{f},Z_i,\varphi,\theta).
\]

\item[\textnormal{(5)}]
For any $Z_1,Z_2\subseteq X$,
\[
\overline{P}(\boldsymbol{f},Z_1\cup Z_2,\varphi,\theta)
=
\max\!\Bigl\{
\overline{P}(\boldsymbol{f},Z_1,\varphi,\theta),
\overline{P}(\boldsymbol{f},Z_2,\varphi,\theta)
\Bigr\}.
\]

\item[\textnormal{(6)}] 
If $0\le\theta<\phi\le1$, then
\(
\mathcal{P}(\boldsymbol{f},Z,\varphi,\theta)
\le
\mathcal{P}(\boldsymbol{f},Z,\varphi,\phi).
\)

\item[\textnormal{(7)}] 
For any \(c\in\mathbb{R}\),
\(
\mathcal{P}(\boldsymbol{f},Z,\varphi+c,\theta)
=
\mathcal{P}(\boldsymbol{f},Z,\varphi,\theta)+c.
\)

\item[\textnormal{(8)}]
If $\varphi\le\psi$, then 
\(
\mathcal{P}(\boldsymbol{f},Z,\varphi,\theta)
\le
\mathcal{P}(\boldsymbol{f},Z,\psi,\theta).
\)
In particular,
\[
\underline{h}_{\mathrm{top}}(\boldsymbol{f},Z,\theta)+\inf\varphi
\le
\underline{P}(\boldsymbol{f},Z,\varphi,\theta)
\le
\underline{h}_{\mathrm{top}}(\boldsymbol{f},Z,\theta)+\sup\varphi,
\]
\[
\overline{h}_{\mathrm{top}}(\boldsymbol{f},Z,\theta)+\inf\varphi
\le
\overline{P}(\boldsymbol{f},Z,\varphi,\theta)
\le
\overline{h}_{\mathrm{top}}(\boldsymbol{f},Z,\theta)+\sup\varphi.
\]
Moreover, $\mathcal{P}(\boldsymbol{f},Z,\cdot,\theta)$ is either finite-valued or identically $+\infty$.

\item[\textnormal{(9)}] 
For every finite open cover $\mathcal U$ of $X$,
\(
\bigl|
\mathcal{P}(\boldsymbol{f},Z,\varphi,\mathcal{U},\theta)
-
\mathcal{P}(\boldsymbol{f},Z,\psi,\mathcal{U},\theta)
\bigr|
\le
\|\varphi-\psi\|,
\) and so if \(\mathcal{P}(\boldsymbol{f},Z,\cdot,\theta) < \infty\), then \(
\bigl|
\mathcal{P}(\boldsymbol{f},Z,\varphi,\theta)
-
\mathcal{P}(\boldsymbol{f},Z,\psi,\theta)
\bigr|
\le
\|\varphi-\psi\|.
\)  
In other words, \(\mathcal{P}(\boldsymbol{f},Z,\cdot,\theta)\) is a continuous function on \(C(X,\mathbb{R})\).

\item[\textnormal{(10)}] 
\(
\mathcal{P}(\boldsymbol{f},Z,c\varphi,\theta)
\begin{cases}
\le c\,\mathcal{P}(\boldsymbol{f},Z,\varphi,\theta), & \text{if } c\ge1,\\[2mm]
\ge c\,\mathcal{P}(\boldsymbol{f},Z,\varphi,\theta), & \text{if } 0 < c\le1.
\end{cases}
\)


\end{itemize}
\end{proposition}

\begin{proof}
\textnormal{(1)}--\textnormal{(8)} can be verified directly from the definitions.

(9) 
Since for every $x\in X$ and $n\in\mathbb N$ we have
\[
\bigl|S_n\varphi(x)-S_n\psi(x)\bigr|
=\biggl|\sum_{k=0}^{n-1}\bigl(\varphi(f_1^k x)-\psi(f_1^k x)\bigr)\biggr|
\le n\|\varphi-\psi\|,
\]
then \(
S_n\varphi(x)\le S_n\psi(x)+n\|\varphi-\psi\|.\)
For any cover $\Gamma$ of $Z$ with $\Gamma \subseteq \bigcup_{N \leq m < N/\theta+1} \mathcal{S}_{m}(\mathcal{U})$, we obtain
\[
\begin{aligned}
&\sum_{\mathbf U \in \Gamma}
\exp\!\left(
-\alpha m(\mathbf U)
+\sup_{x\in X(\mathbf U)} S_{m(\mathbf U)}\varphi(x)
\right)
\\[2mm]
&\qquad\le\;
\sum_{\mathbf U\in\Gamma}
\exp\!\left(
-(\alpha-\|\varphi-\psi\|) m(\mathbf U)
+\sup_{x\in X(\mathbf U)} S_{m(\mathbf U)}\psi(x)
\right).
\end{aligned}
\]
Taking the infimum over all such covers $\Gamma$ gives
\[
M(Z,\alpha,\varphi,\mathcal U,N,\theta)
\le
M(Z,\alpha-\|\varphi-\psi\|,\psi,\mathcal U,N,\theta).
\]
Taking $\limsup$ and $\liminf$ over $N$ respectively, we obtain
\[
\overline m(Z,\alpha,\varphi,\mathcal U,\theta)
\le
\overline m(Z,\alpha-\|\varphi-\psi\|,\psi,\mathcal U,\theta),
\]
\[
\underline m(Z,\alpha,\varphi,\mathcal U,\theta)
\le
\underline m(Z,\alpha-\|\varphi-\psi\|,\psi,\mathcal U,\theta),
\]
which implies
\[
\mathcal P(\boldsymbol f,Z,\varphi,\mathcal U,\theta)
\le
\mathcal P(\boldsymbol f,Z,\psi,\mathcal U,\theta)+\|\varphi-\psi\|.
\]
Interchanging the roles of $\varphi$ and $\psi$ we also get
\[
\mathcal P(\boldsymbol f,Z,\psi,\mathcal U,\theta)
\le
\mathcal P(\boldsymbol f,Z,\varphi,\mathcal U,\theta)+\|\varphi-\psi\|.
\]
Combining these two inequalities gives
\[
\bigl|
\mathcal{P}(\boldsymbol{f},Z,\varphi,\mathcal{U},\theta)
-
\mathcal{P}(\boldsymbol{f},Z,\psi,\mathcal{U},\theta)
\bigr|
\le
\|\varphi-\psi\|,
\]
which is the desired estimate. If \(\overline{P}(\boldsymbol{f},Z,\cdot,\theta) < \infty\), letting $|\mathcal U|\to0$, we obtain
\[
\bigl|
\mathcal{P}(\boldsymbol{f},Z,\varphi,\theta)
-
\mathcal{P}(\boldsymbol{f},Z,\psi,\theta)
\bigr|
\le
\|\varphi-\psi\|.
\]

(10) Fix an arbitrary finite open cover $\mathcal U$ of $X$.
If \(c\ge 1\), then for every \(s>c\,\overline{P}(\boldsymbol{f},Z,\varphi,\mathcal{U},\theta)\), 
\[
\overline{m}\left(Z,\frac{s}{c},\varphi,\mathcal{U},\theta \right)=0.
\]
Hence for every \(\varepsilon\in(0,1]\), there exists 
$N_0 \in \mathbb{N}$ such that for all $N \ge N_0$, 
\[
M\left(Z,\frac{s}{c},\varphi,\mathcal{U},N,\theta\right)<\varepsilon.
\]
Thus one can choose a family
\(
\mathcal{G}
\subseteq
\bigcup_{N \le m < N/\theta + 1}\mathcal{S}_m(\mathcal{U})
\)
covering $Z$ and satisfying
\[
\sum_{\mathbf{U}\in\mathcal{G}}
\exp\!\left(
-\frac{s}{c} m(\mathbf{U})
+ \sup_{x\in X(\mathbf{U})} S_{m(\mathbf{U})}\varphi(x)
\right)
<\varepsilon \le 1.
\]
It follows that
\[
\begin{aligned}
&\sum_{\mathbf U \in \mathcal{G}}
\exp\!\left(
   -s\, m(\mathbf U)
   +\sup_{x\in X(\mathbf U)} S_{m(\mathbf U)}(c\varphi)(x)
 \right)  \\
= &
\sum_{\mathbf U \in \mathcal{G}}
\left[
  \exp\!\left(
    -\tfrac{s}{c}\, m(\mathbf U)
    +\sup_{x\in X(\mathbf U)} S_{m(\mathbf U)}\varphi(x)
  \right)
\right]^{c}\\
\le & \sum_{\mathbf U \in \mathcal{G}}
\exp\!\left(
-\frac{s}{c} m(\mathbf U)
+\sup_{x\in X(\mathbf U)} S_{m(\mathbf U)}\varphi(x)
\right),
\end{aligned}
\]
which implies that 
\[
M(Z,s,c\varphi,\mathcal{U},N,\theta)\le M\left(Z,\frac{s}{c},\varphi,\mathcal{U},N,\theta\right)<\varepsilon
\]
for all \(N \ge N_0\), and therefore
\[
\overline{m}(Z,s,c\varphi,\mathcal U,\theta)=0.
\]
By the definition of the upper pressure,
\[
\overline{P}(\boldsymbol{f},Z,c\varphi,\mathcal{U},\theta)
\le s.
\]
Since $s > c\,\overline{P}(\boldsymbol{f},Z,\varphi,\mathcal{U},\theta)$ is arbitrary,
\[
\overline{P}(\boldsymbol{f},Z,c\varphi,\mathcal{U},\theta)
\le
c\,\overline{P}(\boldsymbol{f},Z,\varphi,\mathcal{U},\theta).
\]
Taking the limit
$|\mathcal{U}|\to 0$ gives
\[
\overline{P}(\boldsymbol{f},Z,c\varphi,\theta)
\le
c\,\overline{P}(\boldsymbol{f},Z,\varphi,\theta).
\]
Similarly, if \(0 < c \le 1\), we have 
\[
\overline{P}(\boldsymbol{f},Z,c\varphi,\theta)
\ge
c\,\overline{P}(\boldsymbol{f},Z,\varphi,\theta).
\]
The same argument with $\liminf$ in place of $\limsup$ yields the corresponding estimate for $\underline{P}$.

\end{proof}

\subsection{Continuity with respect to $\theta$}
In our previous work~\cite{ju2026entropy}, we established the continuity of the lower and upper 
\(\theta\)-intermediate topological entropies for \(\theta \in (0,1]\) and presented an example 
demonstrating possible discontinuities at \(\theta = 0\). 
This phenomenon closely parallels that of the intermediate dimensions introduced 
by Falconer~\cite{Falconer2021Intermediate}, where for \(0 < \theta < \phi \le 1\), the upper and lower \(\theta\)-intermediate dimensions satisfy
\[
\overline{\dim}_{\theta} F
\;\le\;
\overline{\dim}_{\phi} F
\;\le\;
\frac{\phi}{\theta}\,\overline{\dim}_{\theta} F,
\qquad
\underline{\dim}_{\theta} F
\;\le\;
\underline{\dim}_{\phi} F
\;\le\;
\frac{\phi}{\theta}\,\underline{\dim}_{\theta} F.
\]

In what follows, we extend these continuity and comparison results to the lower and upper 
\(\theta\)-intermediate topological pressures.

\begin{proposition}
Let $(X,d)$ be a compact metric space, $\boldsymbol{f}$ a sequence of continuous self-maps of $X$ and $\varphi\in C(X,\mathbb{R})$. For any nonempty \(Z\subseteq X\) and \(0<\theta<\phi\le1\), we have
\[
\overline{P}(\boldsymbol{f},Z,\varphi,\theta)
\;\le\;
\overline{P}(\boldsymbol{f},Z,\varphi,\phi)
\;\le\;
\frac{\phi}{\theta}\,\overline{P}(\boldsymbol{f},Z,\varphi,\theta)+\Bigl(\frac{\phi}{\theta}-1\Bigr)\|\varphi\|,
\]
\[
\underline{P}(\boldsymbol{f},Z,\varphi,\theta)
\;\le\;
\underline{P}(\boldsymbol{f},Z,\varphi,\phi)
\;\le\;
\frac{\phi}{\theta}\,\underline{P}(\boldsymbol{f},Z,\varphi,\theta)+\Bigl(\frac{\phi}{\theta}-1\Bigr)\|\varphi\|.
\]
In particular, when $\varphi \equiv  0$, these inequalities reduce to the corresponding ones for the 
$\theta$-intermediate topological entropies:
\[
\overline{h}_{\mathrm{top}}(\boldsymbol{f},Z,\theta)
\;\le\;
\overline{h}_{\mathrm{top}}(\boldsymbol{f},Z,\phi)
\;\le\;
\frac{\phi}{\theta}\,
\overline{h}_{\mathrm{top}}(\boldsymbol{f},Z,\theta),
\]
\[
\underline{h}_{\mathrm{top}}(\boldsymbol{f},Z,\theta)
\;\le\;
\underline{h}_{\mathrm{top}}(\boldsymbol{f},Z,\phi)
\;\le\;
\frac{\phi}{\theta}\,
\underline{h}_{\mathrm{top}}(\boldsymbol{f},Z,\theta),
\]
which were established in~\cite{ju2026entropy}.
\end{proposition}

\begin{proof}
The left-hand inequality follows from the monotonicity of 
\(\overline{P}(\boldsymbol{f},Z,\varphi,\theta)\) in~\(\theta\).
To prove the right-hand inequality, fix \(0<\theta<\phi\le1\) and a finite open cover \(\mathcal U\) of \(X\). 
Set \(\delta=|\mathcal U|\) and \(M=\|\varphi\|\). 
Fix \(s>\overline{P}^{*}(\boldsymbol{f},Z,\varphi,\mathcal U,\theta)\) and \(\varepsilon>0\). Using the same comparison argument as in Proposition~\ref{prop:basic-properties-all}\textnormal{(8)}, we see that 
$\overline{P}^{*}(\boldsymbol f,Z,\varphi,\mathcal U,\theta)\ge -M$,
and therefore $s+M>0$.
Now, by the definition of \(\overline{P}^{*}(\boldsymbol{f},Z,\varphi,\mathcal U,\theta)\), there exists \(N_0\) such that for every \(N\ge N_0\) one can choose a family
$\mathcal G\subseteq\bigcup_{N\le p \le N/\theta}\mathcal S_p(\mathcal U)$
covering \(Z\) and satisfying
\[
\sum_{\mathbf U\in\mathcal G}\exp\biggl(-s\,m(\mathbf U)+\sup_{x \in X(\mathbf{U})}S_{m(\mathbf{U})}\varphi(x)\biggr)<\varepsilon.
\]
Split \(\mathcal G=\mathcal G_0\cup\mathcal G_1\), where
\[
\mathcal G_0=\left\{\mathbf U\in\mathcal G:N\le m(\mathbf U)\le N/\phi\right\},\qquad
\mathcal G_1=\left\{\mathbf U\in\mathcal G:N/\phi < m(\mathbf U)\le N/\theta\right\}.
\]
Set \(q=\lfloor N/\phi\rfloor\), where \( \lfloor x \rfloor \) denotes the greatest integer less than or equal to \(x\) and define
\[
\mathcal{G}_{1}^{*}:=\left\{\mathbf{U}^{(\phi)}=\mathbf{U}|_{[0,q-1]}:\mathbf{U}\in\mathcal{G}_{1}\right\}
   \subseteq\mathcal{S}_q(\mathcal{U}).
\]
By construction, \(X(\mathbf U)\subseteq X(\mathbf U^{(\phi)})\), 
hence \(\mathcal G_0\cup\mathcal G_1^*\) still covers \(Z\).
Define
\[
t_N
=\frac{s}{q}\cdot\frac{N}{\theta}
+\left(\frac{N}{\theta q}-1\right)M
+\omega(\delta)
\]
so that
\[
t_N\longrightarrow 
\frac{\phi}{\theta}s+\Bigl(\frac{\phi}{\theta}-1\Bigr)M+\omega(\delta)
\quad\text{as }N\to\infty.
\]
For each \(\mathbf U\in\mathcal G_1\) and its prefix 
\(\mathbf U^{(\phi)}\) of length \(q\), one has
\[
\sup_{y \in X\left(\mathbf U^{(\phi)}\right)}S_q \varphi(y) \le \sup_{x \in X\left(\mathbf U\right)}S_{m(\mathbf U)}\varphi(x)+ (m(\mathbf U)-q)M + q\,\omega(\delta).
\]
For such \(\mathbf U\in\mathcal G_1\),
\begin{align*}
&\exp\biggl(-t_N q+\sup_{x \in X\left(\mathbf U^{(\phi)}\right)}S_q \varphi(x)\biggr)\\
= & \exp\biggl(-(s+M)\frac{N}{\theta}+qM-q\omega(\delta)+\sup_{x \in X\left(\mathbf U^{(\phi)}\right)}S_q \varphi(x)\biggr)\\
\le &
\exp\biggl(-s m(\mathbf U)-(m(\mathbf U)-q)M-q\omega(\delta)+\sup_{x \in X\left(\mathbf U^{(\phi)}\right)}S_q \varphi(x)\biggr)\\
\le & 
\exp\biggl(-s m(\mathbf U)+\sup_{x \in X\left(\mathbf U\right)}S_{m(\mathbf U)}\varphi(x)\biggr).
\end{align*}
For \(\mathbf U\in\mathcal G_0\), we have \(t_N\ge s\) for all \(N \in \mathbb{N}\), thus
\[
\exp\biggl(-t_N m(\mathbf U)+\sup_{x \in X\left(\mathbf U\right)}S_{m(\mathbf U)}\varphi(x)\biggr)
\le
\exp\biggl(-s m(\mathbf U)+\sup_{x \in X\left(\mathbf U\right)}S_{m(\mathbf U)}\varphi(x)\biggr).
\]
Summing over \(\mathcal G_0\cup\mathcal G_1^*\) yields
\[
\begin{aligned}
&\sum_{\mathbf V\in \mathcal G_0\cup\mathcal G_1^*}
  \exp\Bigl(
     -t_N m(\mathbf V)
     +\sup_{x \in X(\mathbf V)}
       S_{m(\mathbf V)}\varphi(x)
  \Bigr)
\\
 & \leq
 \sum_{\mathbf U\in\mathcal G}
   \exp\Bigl(
      -s m(\mathbf U)
      +\sup_{x \in X(\mathbf U)}
        S_{m(\mathbf U)}\varphi(x)
   \Bigr)
   <\varepsilon.
\end{aligned}
\]
Hence \(M^{*}(Z,t_N,\varphi,\mathcal U,N,\phi)<\varepsilon\) for all \(N \geq N_0\).
For any 
\[
t>\frac{\phi}{\theta}s+\Bigl(\frac{\phi}{\theta}-1\Bigr)M+\omega(\delta),
\]
there exists \(N_1\) such that \(t>t_N\) for all \(N\ge N_1\),
and therefore
\[
M^{*}(Z,t,\varphi,\mathcal U,N,\phi)\le M^{*}(Z,t_N,\varphi,\mathcal U,N,\phi)<\varepsilon
\quad\text{for all }N\ge\max\{N_0,N_1\}.
\]
Taking the upper limit as \(N\to\infty\) gives
\(\overline m^{*}(Z,t,\varphi,\mathcal U,\phi)=0\),
and hence
\[
\overline{P}^{*}(\boldsymbol{f},Z,\varphi,\mathcal U,\phi)\le t.
\]
Letting 
\(t\downarrow 
\frac{\phi}{\theta}s+(\frac{\phi}{\theta}-1)M+\omega(\delta)\)
and
\(s\downarrow \overline{P}^{*}(\boldsymbol{f},Z,\varphi,\mathcal U,\theta)\)
gives
\[
\overline{P}^{*}(\boldsymbol{f},Z,\varphi,\mathcal U,\phi)
\le
\frac{\phi}{\theta}\,
\overline{P}^{*}(\boldsymbol{f},Z,\varphi,\mathcal U,\theta)
+\Bigl(\frac{\phi}{\theta}-1\Bigr)M+\omega(\delta).
\]
Finally, letting \(|\mathcal U|\to0\) and noting that \(\omega(\delta)\to0\),
we obtain
\[
\overline{P}(\boldsymbol{f},Z,\varphi,\phi)
\le
\frac{\phi}{\theta}\,
\overline{P}(\boldsymbol{f},Z,\varphi,\theta)
+\Bigl(\frac{\phi}{\theta}-1\Bigr)M.
\]
The argument for the lower pressure is completely analogous.
\end{proof}

\begin{corollary}
The maps 
\(
\theta \mapsto 
\underline{P}(\boldsymbol{f},Z,\varphi,\theta)\)
and
\(\theta \mapsto 
\overline{P}(\boldsymbol{f},Z,\varphi,\theta)
\)
are continuous for \(\theta\in(0,1]\).
\end{corollary}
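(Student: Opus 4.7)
The plan is to deduce continuity directly from the two-sided sandwich estimate proved in the preceding proposition, treating left and right continuity at an arbitrary $\theta_0\in(0,1]$ separately. I will give the argument for $\overline{P}$; the lower version is identical. Before squeezing, I first need to reduce to the finite-valued case: by Proposition~\ref{prop:basic-properties-all}(7), the pressure is either finite or identically $+\infty$ in the $\varphi$-argument, and combining the preceding proposition's upper bound with the monotonicity statement from~(5) shows that on $(0,1]$ the function $\theta\mapsto\overline{P}(\boldsymbol{f},Z,\varphi,\theta)$ is either identically $+\infty$ or finite throughout; the former case makes continuity trivial, so I may assume finiteness.

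For \textbf{right continuity} at $\theta_0$, fix $\theta=\theta_0$ in the sandwich
\[
\overline{P}(\boldsymbol{f},Z,\varphi,\theta_0)
\le \overline{P}(\boldsymbol{f},Z,\varphi,\phi)
\le \frac{\phi}{\theta_0}\,\overline{P}(\boldsymbol{f},Z,\varphi,\theta_0)+\Bigl(\frac{\phi}{\theta_0}-1\Bigr)\|\varphi\|,
\]
valid for $\phi\in(\theta_0,1]$. As $\phi\downarrow \theta_0$ the ratio $\phi/\theta_0\to 1$, so the right-hand side tends to $\overline{P}(\boldsymbol{f},Z,\varphi,\theta_0)$, and the squeeze theorem yields $\lim_{\phi\downarrow\theta_0}\overline{P}(\boldsymbol{f},Z,\varphi,\phi)=\overline{P}(\boldsymbol{f},Z,\varphi,\theta_0)$.

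For \textbf{left continuity} at $\theta_0$, apply the same sandwich with $\phi=\theta_0$ and variable $\theta\in(0,\theta_0)$:
\[
\overline{P}(\boldsymbol{f},Z,\varphi,\theta)
\le \overline{P}(\boldsymbol{f},Z,\varphi,\theta_0)
\le \frac{\theta_0}{\theta}\,\overline{P}(\boldsymbol{f},Z,\varphi,\theta)+\Bigl(\frac{\theta_0}{\theta}-1\Bigr)\|\varphi\|.
\]
Rearranging the right-hand inequality and passing to the limit $\theta\uparrow\theta_0$, so that $\theta_0/\theta\to 1$, forces $\liminf_{\theta\uparrow\theta_0}\overline{P}(\boldsymbol{f},Z,\varphi,\theta)\ge \overline{P}(\boldsymbol{f},Z,\varphi,\theta_0)$; combined with the trivial upper bound from monotonicity, this gives $\lim_{\theta\uparrow\theta_0}\overline{P}(\boldsymbol{f},Z,\varphi,\theta)=\overline{P}(\boldsymbol{f},Z,\varphi,\theta_0)$. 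The identical argument for $\underline{P}$ completes the proof.

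The only genuine subtlety is the dichotomy between finite and infinite values: the sandwich is meaningful only under finiteness, and without the observation that finiteness propagates throughout $(0,1]$ one could not rule out a jump to $+\infty$. Beyond this, the argument is a routine application of the squeeze principle, and I do not expect any further obstacle.
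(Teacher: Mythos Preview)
Your argument is correct and follows the intended route: the paper states the corollary without proof, treating it as an immediate consequence of the sandwich inequality in the preceding proposition, and your left/right squeeze is exactly the standard way to extract continuity from such a two-sided estimate. Your explicit handling of the finite/infinite dichotomy is a welcome bit of care that the paper omits.
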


\subsection{Equivalent definition of pressures by Bowen balls}

We now give an equivalent definition of the lower and upper \(\theta\)-intermediate topological pressures by Bowen balls. 
For any $\alpha \in \mathbb{R}$, $N \in \mathbb{N}$, $\delta>0$, $\varphi \in C(X,\mathbb{R})$ and $\theta\in [0,1]$, define
\[
M(Z, \alpha,\varphi, \delta,N, \theta) 
= \inf \left\{\sum_i \exp\biggl(-\alpha\,n_i+\sup_{y \in B_{n_i}(x_i,\delta)}S_{n_i}\varphi(y)\biggr)\right\},
\]
where the infimum is taken over all finite or countable collections \(\mathcal{F}=\{B_{n_i}(x_i,\delta)\}_{i}\) such
that \(x_i\in X,~N \le n_i \le  N/\theta \) if $\theta>0$, and \(n_i \ge N\) if $\theta=0$, and \(\mathcal{F}\) covers \(Z\), i.e., \(Z \subseteq \bigcup_{i}B_{n_i}(x_i,\delta)\).

Let
\[\underline{m}(Z, \alpha,\varphi, \delta,\theta)=\liminf_{N \rightarrow \infty} M(Z, \alpha, \varphi,\delta, N,\theta),\]
\[\overline{m}(Z, \alpha,\varphi, \delta,\theta)=\limsup_{N \rightarrow \infty} M(Z, \alpha, \varphi,\delta, N,\theta).\]

We define the lower and upper \(\theta\)-intermediate topological pressures of \(Z\) relative to \(\delta\) by
\[\underline{P}(\boldsymbol{f},Z,\varphi,\delta,\theta)=\inf \left\{\alpha: \underline{m}(Z, \alpha,\varphi,\delta,\theta)=0\right\}=\sup \left\{\alpha: \underline{m}(Z, \alpha, \varphi,\delta,\theta)=\infty\right\},
\]
\[\overline{P}(\boldsymbol{f},Z,\varphi,\delta,\theta)=\inf \{\alpha: \overline{m}(Z, \alpha, \varphi,\delta,\theta)=0\}=\sup \{\alpha: \overline{m}(Z, \alpha, \varphi,\delta,\theta)=\infty\}.\]

\begin{theorem}
For any set \(Z \subseteq X\),  \(\varphi \in C(X,\mathbb{R})\) and \(\theta \in [0,1]\), the following limits exist:
\[
\underline{P}(\boldsymbol{f},Z,\varphi,\theta)
=\lim_{\delta \to 0}\underline{P}(\boldsymbol{f},Z,\varphi,\delta,\theta),
\qquad
\overline{P}(\boldsymbol{f},Z,\varphi,\theta)
=\lim_{\delta \to 0}\overline{P}(\boldsymbol{f},Z,\varphi,\delta,\theta).
\]
\begin{proof}
The proof is analogous to that of \cite[Theorem 2.11]{ju2026entropy} and is therefore omitted.
\end{proof}
\end{theorem}

If we replace \(\sup_{y \in B_{n_i}(x_i,\delta)} S_{n_i}\varphi(y)\) in the definition of \(M(Z,\alpha,\varphi,\delta,N,\theta)\) by \(S_{n_i}\varphi(x_i)\), then we can define new functions 
\(\mathcal{M}\), \(\underline{\mathfrak{m}}\) and \(\overline{\mathfrak{m}}\).
For any set \(Z \subseteq X\) and \(\delta > 0\),
we denote the respective critical values by
\[
\underline{P}'(\boldsymbol{f},Z,\varphi,\delta,\theta)\quad
\text{and}\quad
\overline{P}'(\boldsymbol{f},Z,\varphi,\delta,\theta).
\]

\begin{proposition}
\label{prop:pressure_equivalence}
Let \((X,d)\) be a compact metric space, \(\boldsymbol{f}\) a sequence of continuous self-maps of \(X\) and \(\varphi \in C(X,\mathbb{R})\).
For any \(Z \subseteq X\) and \(\theta \in [0,1]\), we have
\[
\underline{P}(\boldsymbol{f},Z,\varphi,\theta)
=\lim_{\delta \to 0}\underline{P}'(\boldsymbol{f},Z,\varphi,\delta,\theta),
\qquad
\overline{P}(\boldsymbol{f},Z,\varphi,\theta)
=\lim_{\delta \to 0}\overline{P}'(\boldsymbol{f},Z,\varphi,\delta,\theta).
\]
\end{proposition}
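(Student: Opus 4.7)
The plan is to compare the two set functions $M(Z,\alpha,\varphi,\delta,N,\theta)$ and $\mathcal{M}(Z,\alpha,\varphi,\delta,N,\theta)$ directly, using the uniform continuity of $\varphi$. Since $X$ is compact and $\varphi \in C(X,\mathbb{R})$, I introduce the modulus of continuity
\[
\omega(\delta) := \sup\{|\varphi(x)-\varphi(y)|:d(x,y)\le\delta\},
\]
which satisfies $\omega(\delta)\to 0$ as $\delta\to 0$. The crucial observation is that if $y \in B_n(x,\delta)$, then $d(f_1^j(y), f_1^j(x)) < \delta$ for every $0 \le j \le n-1$, so
\[
\bigl|S_n\varphi(y) - S_n\varphi(x)\bigr| \le n\,\omega(\delta).
\]

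First, I would establish the pointwise sandwich
\[
S_{n_i}\varphi(x_i) \;\le\; \sup_{y\in B_{n_i}(x_i,\delta)} S_{n_i}\varphi(y) \;\le\; S_{n_i}\varphi(x_i) + n_i\,\omega(\delta),
\]
from which, by summing over any admissible Bowen-ball cover with $N\le n_i < N/\theta+1$, I obtain
\[
\mathcal{M}(Z,\alpha,\varphi,\delta,N,\theta) \;\le\; M(Z,\alpha,\varphi,\delta,N,\theta) \;\le\; \mathcal{M}(Z,\alpha-\omega(\delta),\varphi,\delta,N,\theta).
\]
The left-hand inequality is immediate because the $\mathcal{M}$-weight is smaller term by term; the right-hand one absorbs the extra $n_i\omega(\delta)$ into a shift of the multiplier $\alpha$, using $\exp(-\alpha n_i+S_{n_i}\varphi(x_i)+n_i\omega(\delta))=\exp(-(\alpha-\omega(\delta))n_i+S_{n_i}\varphi(x_i))$.

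Next, I would pass to the $\liminf$ and $\limsup$ in $N$ to carry these estimates to $\underline{m}$, $\overline{m}$, $\underline{\mathfrak m}$, $\overline{\mathfrak m}$, and then to the critical values, obtaining
\[
\underline{P}'(\boldsymbol{f},Z,\varphi,\delta,\theta)
\;\le\;
\underline{P}(\boldsymbol{f},Z,\varphi,\delta,\theta)
\;\le\;
\underline{P}'(\boldsymbol{f},Z,\varphi,\delta,\theta)+\omega(\delta),
\]
and the analogous chain for $\overline{P}'$ and $\overline{P}$. Letting $\delta\to 0$, the shift $\omega(\delta)$ vanishes, and the previous theorem identifies the limit of $\underline{P}(\boldsymbol{f},Z,\varphi,\delta,\theta)$ (resp.\ $\overline{P}(\boldsymbol{f},Z,\varphi,\delta,\theta)$) with $\underline{P}(\boldsymbol{f},Z,\varphi,\theta)$ (resp.\ $\overline{P}(\boldsymbol{f},Z,\varphi,\theta)$), yielding the claim.

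The only delicate point is verifying that the $\alpha$-shift translates correctly into the pressure critical values: if $\underline{\mathfrak m}(Z,\alpha-\omega(\delta),\varphi,\delta,\theta)=0$ then $\underline{m}(Z,\alpha,\varphi,\delta,\theta)=0$, which gives $\underline{P}(\boldsymbol{f},Z,\varphi,\delta,\theta)\le \underline{P}'(\boldsymbol{f},Z,\varphi,\delta,\theta)+\omega(\delta)$, and symmetric reasoning for the other direction. This is routine once the set-function inequality is in hand, and is essentially the same $\alpha$-shift bookkeeping used in Proposition~\ref{prop:basic-properties-all}(8); I do not expect any genuine obstacle beyond this careful accounting.
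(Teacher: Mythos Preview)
Your proposal is correct and follows essentially the same route as the paper: establish the sandwich $\underline{P}'(\boldsymbol{f},Z,\varphi,\delta,\theta)\le \underline{P}(\boldsymbol{f},Z,\varphi,\delta,\theta)\le \underline{P}'(\boldsymbol{f},Z,\varphi,\delta,\theta)+\omega(\delta)$ (and likewise for $\overline{P}$) via the pointwise estimate $S_n\varphi(x)\le\sup_{y\in B_n(x,\delta)}S_n\varphi(y)\le S_n\varphi(x)+n\,\omega(\delta)$, then let $\delta\to0$. The only cosmetic difference is that the paper uses the modulus $\gamma(\delta)=\sup\{|\varphi(x)-\varphi(y)|:d(x,y)<2\delta\}$ in place of your $\omega(\delta)$, which is immaterial.
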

\begin{proof}
We follow the idea given in \cite{zhong2023variational}. Fix \(\delta>0\).
It is clear that 
\[
\overline{P}'(\boldsymbol{f},Z,\varphi,\delta,\theta)
\le \overline{P}(\boldsymbol{f},Z,\varphi,\delta,\theta).
\]
Then for any \(x\in X\) and \(n\in\mathbb{N}\),
\[
S_n\varphi(x)
\le
\sup_{y\in B_n(x,\delta)}S_n\varphi(y)
\le
S_n\varphi(x)+n\omega(\delta).
\]
Hence
\[
\begin{aligned}
\mathcal{M}(Z,\alpha,\varphi,\delta,N,\theta)
&=\inf\Bigl\{ \sum_i e^{-\alpha n_i+S_{n_i}\varphi(x_i)}\Bigr\} \\
&\ge \inf\Bigl\{ \sum_i e^{-(\alpha+\omega(\delta))n_i+ \sup_{y\in B_{n_i}(x_i,\delta)}S_{n_i}\varphi(y)}\Bigr\} \\
&=M\left(Z,\alpha+\omega(\delta),\varphi,\delta,N,\theta\right).
\end{aligned}
\]
Taking the $\limsup_{N\to\infty}$ on both sides yields
\[
\overline{\mathfrak{m}}(Z,\alpha,\varphi,\delta,\theta)
\ge
\overline{m}(Z,\alpha+\omega(\delta),\varphi,\delta,\theta).
\]
This implies that
\[
\overline{P}(\boldsymbol{f},Z,\varphi,\delta,\theta)
\le
\overline{P}'(\boldsymbol{f},Z,\varphi,\delta,\theta)+\omega(\delta).
\]
It then follows that
\[
\overline{P}'(\boldsymbol{f},Z,\varphi,\delta,\theta)
\le
\overline{P}(\boldsymbol{f},Z,\varphi,\delta,\theta)
\le
\overline{P}'(\boldsymbol{f},Z,\varphi,\delta,\theta)+\omega(\delta),
\]
and the same inequality holds for \(\underline{P}\).
Since \(\omega(\delta)\to0\) as \(\delta\to0\), the desired equalities follow by letting \(\delta\to0\).
\end{proof}

\section{Dynamical properties of intermediate topological pressures}\label{sec:properties}

In this section, we establish several fundamental properties of the intermediate topological pressures for $(X,\boldsymbol{f})$. 
These results are inspired by the corresponding properties of classical topological pressure for NDSs in \cite{kong2015topological}, and extend them to the intermediate topological pressures.

\begin{proposition}\label{sc}
Let \((X,d)\) be a compact metric space and \( \boldsymbol{f} \) be a sequence of continuous self-maps of \( X \). 
Then, for any \(Z\subseteq X, \varphi \in C(X,\mathbb{R})\) and \(\theta\in(0,1]\),
\[
\underline{P}(\boldsymbol{f},\overline Z,\varphi,\theta)
=
\underline{P}(\boldsymbol{f},Z,\varphi,\theta),
\quad
\overline{P}(\boldsymbol{f},\overline Z,\varphi,\theta)
=
\overline{P}(\boldsymbol{f},Z,\varphi,\theta).
\]
\end{proposition}

\begin{proof}
The proof is identical to that of Proposition 3.1 in \cite{ju2026entropy}, 
so we omit the details.
\end{proof}

\begin{proposition}
Let \((X,d)\) be a compact metric space, \( \boldsymbol{f} \) a sequence of equicontinuous self-maps of $X$, and $\varphi \equiv c$ a constant function. Then, for any $\theta\in [0,1]$ and all $m \in \mathbb{N}$,
\[
\underline P\left(\boldsymbol{f}^m,Z,S_m\varphi,\theta\right)
= m\,\underline{P}\left(\boldsymbol{f},Z,\varphi,\theta\right),
\qquad
\overline P\left(\boldsymbol{f}^m,Z,S_m\varphi,\theta\right)
= m\,\overline{P}\left(\boldsymbol{f},Z,\varphi,\theta\right).
\]
\end{proposition}

\begin{proof}
By Proposition 3.4 in \cite{ju2026entropy}, we have
\[
\underline{h}_{\mathrm{top}}(\boldsymbol{f}^m,Z,\theta)
= m\,\underline{h}_{\mathrm{top}}(\boldsymbol{f},Z,\theta),\quad
\overline{h}_{\mathrm{top}}(\boldsymbol{f}^m,Z,\theta)
= m\,\overline{h}_{\mathrm{top}}(\boldsymbol{f},Z,\theta).
\]
Since $\varphi \equiv c$, and according to (7) of Proposition \ref{prop:basic-properties-all},
\[
\underline{P}(\boldsymbol{f}^m,Z,S_m\varphi,\theta)
= mc + \underline{h}_{\mathrm{top}}(\boldsymbol{f}^m,Z,\theta)
= m\bigl(c + \underline{h}_{\mathrm{top}}(\boldsymbol{f},Z,\theta)\bigr)
= m\,\underline{P}(\boldsymbol{f},Z,\varphi,\theta),
\]
and similarly for the upper pressure.
\end{proof}

\begin{proposition}\label{pr-theta}
Let \( (X,d) \) be a compact metric space and \( \boldsymbol{f} \) be a periodic sequence
of continuous self-maps of \(X\) with period \(m\), that is,
$f_{n+m}=f_n,~n \in \mathbb{N}.$
Then for any \(Z\subseteq X\), \(\varphi\in C(X,\mathbb R)\) and \(\theta\in[0,1]\),
\[
\underline{P}\left(\boldsymbol{f}^m,Z,S_m\varphi,\theta\right)
= m\,\underline{P}\left(\boldsymbol{f},Z,\varphi,\theta\right),
\qquad
\overline{P}\left(\boldsymbol{f}^m,Z,S_m\varphi,\theta\right)
= m\,\overline{P}\left(\boldsymbol{f},Z,\varphi,\theta\right).
\]
\end{proposition}

\begin{proof}
The case $\theta=0$ has been established in \cite[Theorem~5.12]{chen2025nonautonomous}. 
Therefore, it suffices to consider the case $\theta\in(0,1]$.
The proof is divided into two parts. 

\noindent\textbf{Part I.}
We prove that
\[
\underline{P}(\boldsymbol f^m,Z,S_m\varphi,\theta)
\le
m\,\underline{P}(\boldsymbol f,Z,\varphi,\theta),
\quad
\overline{P}(\boldsymbol f^m,Z,S_m\varphi,\theta)
\le
m\,\overline{P}(\boldsymbol f,Z,\varphi,\theta).
\]

Fix \(\delta>0\) and \(\alpha \in \mathbb{R}\), and write \(M=\|\varphi\|\).
Let \(k=mN+r\), where \(N\in\mathbb N\) and \(0\le r<m\).
Take any cover
$\left\{B_{n_i,\boldsymbol f}(x_i,\delta)\right\}_i$
of \(Z\), where \(n_i\in[k,k/\theta]\) for all \(i\).
The argument is carried out in three steps.

\emph{Step~1.}
Set \(t_i=n_i-r\). Since \(t_i\le n_i\), we have
$B_{n_i,\boldsymbol f}(x_i,\delta)\subseteq B_{t_i,\boldsymbol f}(x_i,\delta),$
so \(\{B_{t_i,\boldsymbol f}(x_i,\delta)\}_i\) is still a cover of \(Z\).
Moreover,
$t_i\in [mN,(mN+r)/\theta-r].$
Also,
\[
S_{n_i}^{\boldsymbol f}\varphi(x_i)
=
S_{t_i}^{\boldsymbol f}\varphi(x_i)
+\sum_{\ell=0}^{r-1}\varphi\bigl(f_1^{\,t_i+\ell}(x_i)\bigr)
\ge
S_{t_i}^{\boldsymbol f}\varphi(x_i)-rM.
\]
Hence
\begin{equation}\label{eq:step1-periodic}
\begin{aligned}
\sum_i e^{-\alpha n_i+S_{n_i}^{\boldsymbol f}\varphi(x_i)}
&=
\sum_i e^{-\alpha t_i-\alpha r+S_{n_i}^{\boldsymbol f}\varphi(x_i)} \\
&\ge
e^{-(|\alpha|+M)r}
\sum_i e^{-\alpha t_i+S_{t_i}^{\boldsymbol f}\varphi(x_i)}.
\end{aligned}
\end{equation}

\smallskip
\emph{Step~2.}
Set
\[
q:=\Bigl\lfloor\frac{mN}{\theta}\Bigr\rfloor,
\qquad
q_i:=\min\{q,t_i\},
\qquad
\Delta_i:=t_i-q_i\ge0.
\]
Then $q_i\in[mN,mN/\theta].$
Since \(q_i\le t_i\), we have
$
B_{t_i,\boldsymbol f}(x_i,\delta)\subseteq B_{q_i,\boldsymbol f}(x_i,\delta),
$
so \(\{B_{q_i,\boldsymbol f}(x_i,\delta)\}_i\) still covers \(Z\).
We next estimate \(\Delta_i\).
If \(t_i\le q\), then \(q_i=t_i\), and hence \(\Delta_i=0\).
If \(t_i>q\), then \(q_i=q\) and
\[
\Delta_i=t_i-q=n_i-r-q
\le \frac{k}{\theta}-r-\Bigl\lfloor\frac{mN}{\theta}\Bigr\rfloor.
\]
Since \(k=mN+r\) and \(\lfloor x\rfloor\ge x-1\), it follows that
\[
\Delta_i
\le 
\frac{mN+r}{\theta}-r-\Bigl(\frac{mN}{\theta}-1\Bigr)
=
\frac{r}{\theta}+1-r
<
\frac{m}{\theta}+1-r.
\]
Thus, in all cases,
\[
0\le \Delta_i<\frac{m}{\theta}+1-r.
\]
Furthermore,
\[
S_{t_i}^{\boldsymbol f}\varphi(x_i)
=
S_{q_i}^{\boldsymbol f}\varphi(x_i)
+\sum_{\ell=0}^{\Delta_i-1}\varphi\bigl(f_1^{\,q_i+\ell}(x_i)\bigr)
\ge
S_{q_i}^{\boldsymbol f}\varphi(x_i)-\Delta_i M.
\]
Therefore,
\begin{equation}\label{eq:step2-periodic}
\begin{aligned}
\sum_i e^{-\alpha t_i+S_{t_i}^{\boldsymbol f}\varphi(x_i)}
&\ge
\sum_i e^{-\alpha q_i-\alpha\Delta_i+S_{q_i}^{\boldsymbol f}\varphi(x_i)-\Delta_i M} \\
&=
\sum_i e^{-(\alpha+M)\Delta_i}e^{-\alpha q_i+S_{q_i}^{\boldsymbol f}\varphi(x_i)} \\
&\ge
e^{-(|\alpha|+M)(\frac{m}{\theta}+1-r)}
\sum_i e^{-\alpha q_i+S_{q_i}^{\boldsymbol f}\varphi(x_i)}.
\end{aligned}
\end{equation}

\smallskip
\emph{Step~3.}
Let
$p_i=\Bigl\lfloor\frac{q_i}{m}\Bigr\rfloor.$
Since \(q_i \in [mN,mN/\theta]\), we have
$p_i\in[N,N/\theta].$
For each \(i\), we also have
$B_{q_i,\boldsymbol f}(x_i,\delta)\subseteq B_{p_i,\boldsymbol f^m}(x_i,\delta),
$
hence \(\{B_{p_i,\boldsymbol f^m}(x_i,\delta)\}_i\) is a cover of \(Z\)
with respect to \(\boldsymbol f^m\).
Next,
\[
S_{q_i}^{\boldsymbol f}\varphi(x_i)
=
S_{mp_i}^{\boldsymbol f}\varphi(x_i)
+\sum_{\ell=0}^{q_i-mp_i-1}\varphi\bigl(f_1^{\,mp_i+\ell}(x_i)\bigr)
\ge
S_{mp_i}^{\boldsymbol f}\varphi(x_i)-mM,
\]
as \(0\le q_i-mp_i<m\).
Moreover, \(q_i<mp_i+m\) gives
$
e^{-\alpha q_i}\ge e^{-|\alpha|m}e^{-\alpha mp_i}.
$
Since \(\boldsymbol f\) has period \(m\), for every
\(p\in\mathbb N\) and \(x\in X\),
\begin{equation}\label{pr-eq}
 S_p^{\boldsymbol f^m}(S_m\varphi)(x)
=
\sum_{j=0}^{p-1}S_m\varphi\bigl(f_1^{\,jm}(x)\bigr)
=
\sum_{j=0}^{p-1}\sum_{\ell=0}^{m-1}\varphi\bigl(f_1^{\,jm+\ell}(x)\bigr)
=
S_{mp}^{\boldsymbol f}\varphi(x).   
\end{equation}
Therefore,
\begin{equation}\label{eq:step3-periodic}
\begin{aligned}
\sum_i e^{-\alpha q_i+S_{q_i}^{\boldsymbol f}\varphi(x_i)}
&\ge
\sum_i e^{-|\alpha|m-mM}\,
e^{-\alpha mp_i+S_{mp_i}^{\boldsymbol f}\varphi(x_i)} \\
&=
e^{-(|\alpha|+M)m}
\sum_i e^{-\alpha mp_i+S_{p_i}^{\boldsymbol f^m}(S_m\varphi)(x_i)}.
\end{aligned}
\end{equation}

Combining \eqref{eq:step1-periodic}, \eqref{eq:step2-periodic} and \eqref{eq:step3-periodic}, we obtain
\[
\sum_i e^{-\alpha n_i+S_{n_i}^{\boldsymbol f}\varphi(x_i)}
\ge
C
\sum_i e^{-\alpha mp_i+S_{p_i}^{\boldsymbol f^m}(S_m\varphi)(x_i)},
\]
where
$C=e^{-(|\alpha|+M)(\frac{m}{\theta}+m+1)}.$
Since \(\{B_{p_i,\boldsymbol f^m}(x_i,\delta)\}_i\) is an admissible cover of \(Z\) for
\(\mathcal M_{\boldsymbol f^m}(Z,\alpha m,S_m\varphi,\delta,N,\theta)\), it follows that
\[
\mathcal M_{\boldsymbol f}(Z,\alpha,\varphi,\delta,k,\theta)
\ge
C\,\mathcal M_{\boldsymbol f^m}(Z,\alpha m,S_m\varphi,\delta,N,\theta).
\]
Taking \(\liminf_{N\to\infty}\) and \(\limsup_{N\to\infty}\), respectively, we get
\[
\underline{\mathfrak m}_{\boldsymbol f}(Z,\alpha,\varphi,\delta,\theta)
\ge
C\,\underline{\mathfrak m}_{\boldsymbol f^m}(Z,\alpha m,S_m\varphi,\delta,\theta),
\]
and
\[
\overline{\mathfrak m}_{\boldsymbol f}(Z,\alpha,\varphi,\delta,\theta)
\ge
C\,\overline{\mathfrak m}_{\boldsymbol f^m}(Z,\alpha m,S_m\varphi,\delta,\theta).
\]
Hence,
\[
\underline P'\!\left(\boldsymbol f^m,Z,S_m\varphi,\delta,\theta\right)
\le
m\,\underline P'\!\left(\boldsymbol f,Z,\varphi,\delta,\theta\right),
\qquad
\overline P'\!\left(\boldsymbol f^m,Z,S_m\varphi,\delta,\theta\right)
\le
m\,\overline P'\!\left(\boldsymbol f,Z,\varphi,\delta,\theta\right).
\]
Finally, letting \(\delta\to0\), we obtain
\[
\underline P\!\left(\boldsymbol f^m,Z,S_m\varphi,\theta\right)
\le
m\,\underline P\!\left(\boldsymbol f,Z,\varphi,\theta\right),
\qquad
\overline P\!\left(\boldsymbol f^m,Z,S_m\varphi,\theta\right)
\le
m\,\overline P\!\left(\boldsymbol f,Z,\varphi,\theta\right).
\]
This completes Part~I.

\medskip
\noindent\textbf{Part~II.}
We prove that
\[
\underline{P}(\boldsymbol f^m,Z,S_m\varphi,\theta)
\ge
m\,\underline{P}(\boldsymbol f,Z,\varphi,\theta),
\quad
\overline{P}(\boldsymbol f^m,Z,S_m\varphi,\theta)
\ge
m\,\overline{P}(\boldsymbol f,Z,\varphi,\theta).
\]

For every $\eta>0$, define
\[
\delta = \delta(\eta):=\eta+
\max_{0\le j\le m-1}
\sup_{d(u,v)\le \eta}
d\bigl(f_1^j(u),f_1^j(v)\bigr).
\]
Since the maps $f_1^j$ ($0\le j\le m-1$) are uniformly continuous,
we have $\delta(\eta)\to 0$ as $\eta\to 0$. Moreover,
\[
d(u,v)<\eta
\quad\Longrightarrow\quad
d\bigl(f_1^j(u),f_1^j(v)\bigr) < \delta
\quad\text{for all }0\le j\le m-1.
\]

Now let
$\{B_{p_i,\boldsymbol f^m}(x_i,\eta)\}_i$
be any cover of \(Z\) with respect to \(\boldsymbol f^m\), where
$p_i\in[N,N/\theta]$.
We claim that for  every $i$,
$B_{p_i,\boldsymbol f^m}(x_i,\eta)\subseteq B_{mp_i,\boldsymbol f}(x_i,\delta).$
Indeed, if \(y\in B_{p_i,\boldsymbol f^m}(x_i,\eta)\), then for every \(0\le j<p_i\),
$d\bigl(f_1^{jm}(y),f_1^{jm}(x_i)\bigr)<\eta.$
Given any \(0\le t<mp_i\), write \(t=jm+s\) with \(0\le j<p_i\) and \(0\le s<m\).
Then, by the choice of \(\eta\),
\[
d\bigl(f_1^t(y),f_1^t(x_i)\bigr)
=
d\bigl(f_1^s(f_1^{jm}(y)),f_1^s(f_1^{jm}(x_i))\bigr)
<\delta.
\]
Hence \(y\in B_{mp_i,\boldsymbol f}(x_i,\delta)\).
Therefore,
$\{B_{mp_i,\boldsymbol f}(x_i,\delta)\}_i$
is a cover of \(Z\) with respect to \(\boldsymbol f\), and since
\(p_i\in[N,N/\theta]\), we have
$mp_i\in[mN,mN/\theta].$
Next, by \eqref{pr-eq},
\begin{equation}\label{eq:step5}
\sum_i
\exp\Bigl(
-\alpha p_i + S_{p_i}^{\boldsymbol f^m}(S_m\varphi)(x_i)
\Bigr)
=
\sum_i
\exp\Bigl(
-\frac{\alpha}{m}\,mp_i + S_{mp_i}^{\boldsymbol f}\varphi(x_i)
\Bigr).
\end{equation}

For $r\in\{0,\dots,m-1\}$, set $K=mN+r$ and
$\Omega(\delta):=\sum_{\ell=0}^{r-1}\omega(2^\ell\delta).$
We extend the length of each Bowen ball 
$B_{mp_i,\boldsymbol{f}}(x_i,\delta)$
from $mp_i$ to $mp_i+r$ without losing coverage.
Since $X$ is compact, there exist points $z_1,\ldots,z_L\in X$ such that
\[
X \subseteq \bigcup_{j=1}^{L} B_d\bigl(z_j,\delta\bigr).
\]
For each \(i\) and each \(j\in\{1,\ldots,L\}\) such that
\[
B_{mp_i,\boldsymbol{f}}(x_i,\delta)\cap
f_1^{-mp_i}B_d(z_j,\delta)\neq\emptyset,
\]
choose a point \(y_{i+1,j}\) from this nonempty intersection.
It is easy to verify that
\[
B_{mp_i,\boldsymbol{f}}(x_i,\delta)
\subseteq 
\bigcup_{j}
B_{mp_i+1,\boldsymbol{f}}\bigl(y_{i+1,j},2\delta\bigr).
\]
Next, for any $0 \le k \le mp_i-1$ and any such $j$, we have
\[
\bigl|\varphi\bigl(f_1^{k}(y_{i+1,j})\bigr) - \varphi\bigl(f_1^{k}(x_i)\bigr)\bigr|
\leq \omega\bigl(\delta\bigr),
\]
which implies
\[
S_{mp_i}^{\boldsymbol{f}}\varphi(x_i) +mp_i\,\omega\bigl(\delta\bigr)
\;\geq\; S_{mp_i}^{\boldsymbol{f}}\varphi(y_{i+1,j}).
\]
Hence
\[
S_{mp_i}^{\boldsymbol{f}}\varphi(x_i) + mp_i\,\omega\bigl(\delta\bigr) + M
\;\geq\; S_{mp_i+1}^{\boldsymbol{f}}\varphi(y_{i+1,j}),
\]
so that
\[
e^{-\alpha}\,e^{mp_i\omega(\delta) + M}\,
e^{-\alpha mp_i+S_{mp_i}^{\boldsymbol{f}}\varphi(x_i)}
\;\geq\;
e^{-\alpha(mp_i+1)+S_{mp_i+1}^{\boldsymbol{f}}\varphi(y_{i+1,j})}.
\]
Summing over all admissible indices $i,j$, we obtain
\[
L \sum_{i} e^{-\alpha}\,e^{mp_i\omega(\delta)+M}
e^{-\alpha mp_i+S_{mp_i}^{\boldsymbol{f}}\varphi(x_i)}
\;\geq\;
\sum_{i}\sum_{j} e^{-\alpha(mp_i+1)+S_{mp_i+1}^{\boldsymbol{f}}\varphi(y_{i+1,j})}.
\]
By repeating this construction \(r\) times, we obtain for each \(i\) a finite family
\[
\mathcal F_i=
\left\{
B_{mp_i+r,\boldsymbol f}(y_{i+r,j},2^r\delta):
j=1,\ldots,M_i,\ M_i\le L^r
\right\},
\]
such that
\[
B_{mp_i,\boldsymbol f}(x_i,\delta)
\subseteq
\bigcup_{j=1}^{M_i}
B_{mp_i+r,\boldsymbol f}(y_{i+r,j},2^r\delta).
\]
For each \(0\le \ell\le r-1\), we have
\[
\left|
S_{mp_i+\ell}^{\boldsymbol f}\varphi(y_{i+\ell+1,j})
-
S_{mp_i+\ell}^{\boldsymbol f}\varphi(x_i)
\right|
\le
(mp_i+\ell)\,\omega(2^\ell\delta).
\]
Summing over \(\ell=0,\ldots,r-1\), we obtain
\[
S_{mp_i+r}^{\boldsymbol f}\varphi(y_{i+r,j})
\le
S_{mp_i}^{\boldsymbol f}\varphi(x_i)
+
\sum_{\ell=0}^{r-1}(mp_i+\ell)\omega(2^\ell\delta)
+
rM.
\]
Since
\[
\sum_{\ell=0}^{r-1}(mp_i+\ell)\omega(2^\ell\delta)
\le
(mp_i+r)\Omega(\delta),
\]
it follows that
\[
S_{mp_i+r}^{\boldsymbol f}\varphi(y_{i+r,j})
\le
S_{mp_i}^{\boldsymbol f}\varphi(x_i)
+
(mp_i+r)\Omega(\delta)
+
rM.
\]
Therefore,
\[
e^{-(\alpha+\Omega(\delta))(mp_i+r)
+S_{mp_i+r}^{\boldsymbol f}\varphi(y_{i+r,j})}
\le
e^{(M-\alpha)r}
e^{-\alpha mp_i+S_{mp_i}^{\boldsymbol f}\varphi(x_i)}.
\]
Summing over \(i,j\) and using \(M_i\le L^r\), we obtain
\begin{equation}\label{eq:step6}
\sum_i
e^{-\alpha mp_i+S_{mp_i}^{\boldsymbol f}\varphi(x_i)}
\ge
C
\sum_i\sum_{j=1}^{M_i}
e^{-(\alpha+\Omega(\delta))(mp_i+r)
+S_{mp_i+r}^{\boldsymbol f}\varphi(y_{i+r,j})},
\end{equation}
where
$C=L^{-r}e^{(\alpha-M)r}.$
As $\mathcal F=\bigcup_i\mathcal F_i$ covers $Z$ and its elements have lengths in $[K,\,K/\theta]$,
combining \eqref{eq:step5} and \eqref{eq:step6} and taking the infimum over all initial covers $\mathcal G_{N,\boldsymbol{f}^m}$, we obtain
\[
\mathcal{M}_{\boldsymbol{f}^m}(Z,\alpha m,S_m\varphi,\eta,N,\theta)
\;\ge\;
C\,
\mathcal{M}_{\boldsymbol{f}}\!\left(
Z,\alpha+\Omega(\delta),\varphi,\,
2^r\delta,\,
K,\theta\right).
\]
Taking \(\liminf_{N\to\infty}\) and \(\limsup_{N\to\infty}\), respectively, we obtain
\[
\underline{\mathfrak{m}}_{\boldsymbol{f}^m}(Z,\alpha m,S_m\varphi,\eta,\theta)
\;\ge\;
C\,
\underline{\mathfrak{m}}_{\boldsymbol{f}}\!\left(
Z,\alpha+\Omega(\delta),\varphi,\,
2^r\delta,\theta\right),
\]
and
\[
\overline{\mathfrak{m}}_{\boldsymbol{f}^m}(Z,\alpha m,S_m\varphi,\eta,\theta)
\;\ge\;
C\,
\overline{\mathfrak{m}}_{\boldsymbol{f}}\!\left(
Z,\alpha+\Omega(\delta),\varphi,\,
2^r\delta,\theta\right).
\]
This implies that
\[
\underline{P}'(\boldsymbol{f}^m,Z,S_m\varphi,\eta,\theta) \geq m\underline{P}'(\boldsymbol{f},Z,\varphi,2^r\delta,\theta)-m\Omega(\delta),
\]
\[
\overline{P}'(\boldsymbol{f}^m,Z,S_m\varphi,\eta,\theta) \geq m\overline{P}'(\boldsymbol{f},Z,\varphi,2^r\delta,\theta)-m\Omega(\delta).
\]
Finally, as $\eta \to 0$ we have 
$\delta\to0$ and $\Omega(\delta)\to0$, 
and therefore the desired equalities for 
$\underline P$ and $\overline P$ follow.

Combining Parts~I and~II completes the proof.
\end{proof}

\begin{proposition}\label{le5.1-theta-refined}
Let \( (X,d) \) be a compact metric space and \( \boldsymbol{f} \) be a sequence of continuous self-maps of \( X \).
For any $Z\subseteq X, k\in\mathbb N, \varphi \in C(X,\mathbb{R})$ and $\theta\in [0,1]$, we have
\[
\underline{P}\bigl(\boldsymbol{f}_{k},Z,\varphi,\theta\bigr)
=\underline{P}\bigl(\boldsymbol{f}_{k+1},f_k(Z),\varphi,\theta\bigr),
\quad
\overline{P}\bigl(\boldsymbol{f}_{k},Z,\varphi,\theta\bigr)
=\overline{P}\bigl(\boldsymbol{f}_{k+1},f_k(Z),\varphi,\theta\bigr).
\]
\end{proposition}

\begin{proof}
We only give the proof for the lower $\theta$-intermediate pressure. 
The argument for the upper $\theta$-intermediate pressure is entirely analogous. Set \(M=\|\varphi\|\). We divide the proof into two parts as follows.

\medskip
\noindent\textbf{Part I.}
We prove that
\[
\underline{P}\bigl(\boldsymbol{f}_{k},Z,\varphi,\theta\bigr)
\geq 
\underline{P}\bigl(\boldsymbol{f}_{k+1},f_k(Z),\varphi,\theta\bigr).
\]
\noindent\emph{Case 1: $\theta\in(0,1]$.}
Fix \(\alpha \in \mathbb{R}\) and \(N \ge 2\). Choose a family of Bowen balls
\(\mathcal F=\bigl\{B_{n_i,\boldsymbol{f}_{k}}(x_i,\delta)\bigr\}_i\)
with respect to \(\boldsymbol{f}_{k}\) such that
\[
Z\subseteq\bigcup_i B_{n_i,\boldsymbol{f}_{k}}(x_i,\delta),
\qquad 
N\le n_i \le \frac{N}{\theta} \quad\text{for all }i.
\]
Let $R=(N-1)/\theta$ and set $q_i=\min\{n_i,\lfloor R\rfloor+1\}$, so that
\(q_i \in [N,\lfloor R\rfloor+1]\) and 
$Z\subseteq\bigcup_i B_{q_i,\boldsymbol{f}_{k}}(x_i,\delta).$
A direct computation shows that
\[
B_{q_i,\boldsymbol{f}_{k}}(x_i,\delta) \subseteq  f_{k}^{-1}
\left(\bigcap_{t=0}^{q_i-2} f_{k+1}^{-t}(B_{d}(f_{k+1}^t(f_{k}(x_i)),\delta))\right),
\]
and therefore
\[
f_{k}(Z) \subseteq \bigcup_i B_{q_i-1,\boldsymbol{f}_{k+1}}(f_{k}(x_i),\delta),\qquad
q_i-1\in[N-1,R].
\]
Since 
\(
S_{q_i-1}^{\boldsymbol{f}_{k+1}}\varphi(f_{k}(x_i))=S_{q_i}^{\boldsymbol{f}_{k}}\varphi(x_i)-\varphi(x_i),
\)
we have
\[
S_{n_i}^{\boldsymbol{f}_{k}}\varphi(x_i) \geq S_{q_i-1}^{\boldsymbol{f}_{k+1}}\varphi(f_{k}(x_i))-(n_i-q_i+1)M.
\]
Furthermore, \(n_i-q_i<1/\theta\), and hence
\[
\begin{aligned}
e^{-\alpha n_i+S_{n_i}^{\boldsymbol{f}_{k}}\varphi(x_i)}
&\; \ge \;
e^{-\alpha q_i}e^{-|\alpha|/\theta
   +S_{q_i-1}^{\boldsymbol{f}_{k+1}}\varphi(f_k(x_i))
   -(1/\theta+1)M}
\\[0.4em]
&\;\ge\;
e^{-\alpha (q_i-1)+S_{q_i-1}^{\boldsymbol{f}_{k+1}}\varphi(f_k(x_i))}
\;e^{-(|\alpha|+M)(1/\theta+1)}.
\end{aligned}
\]
It follows that
\begin{align*}
e^{(|\alpha|+M)(\frac1\theta+1)}
\sum_{i}
   e^{-\alpha n_i+S_{n_i}^{\boldsymbol{f}_{k}}\varphi(x_i)}
&\;\ge\;
\sum_{i}
   e^{-\alpha (q_i-1)+S_{q_i-1}^{\boldsymbol{f}_{k+1}}\varphi(f_{k}(x_i))}
\\[0.4em]
&\;\ge\;
\mathcal{M}_{\boldsymbol{f}_{k+1}}\bigl(f_k(Z),\alpha,\varphi,\delta,N-1,\theta\bigr).
\end{align*}
Taking the infimum over $\mathcal F$ and then $\liminf_{N\to\infty}$ gives
\[
e^{(|\alpha|+M)(\frac1\theta+1)}\,
\underline{\mathfrak{m}}_{\boldsymbol{f}_{k}}(Z,\alpha,\varphi,\delta,\theta)
\;\ge\;
\underline{\mathfrak{m}}_{\boldsymbol{f}_{k+1}}(f_k(Z),\alpha,\varphi,\delta,\theta).
\]
Thus
\[
\underline{P}'\bigl(\boldsymbol{f}_{k},Z,\varphi,\delta,\theta\bigr)
\geq 
\underline{P}'\bigl(\boldsymbol{f}_{k+1},f_k(Z),\varphi,\delta,\theta\bigr).
\]
\smallskip
\noindent\emph{Case 2: $\theta=0$.}
In this case, the admissible lengths in the definition of 
$\mathcal M_{\boldsymbol{f}_{k}}(\cdot,\alpha,\varphi,\delta,N,0)$ satisfy $n_i\ge N$ without any upper bound.
Fix $\alpha\in \mathbb{R}$ and $N \ge 2$, and let
$\mathcal F=\{B_{n_i,\boldsymbol{f}_{k}}(x_i,\delta)\}_i$
be a family of Bowen balls with respect to $\boldsymbol{f}_{k}$ such that $n_i\ge N$ for all $i$ and
$Z\subseteq\bigcup_i B_{n_i,\boldsymbol{f}_{k}}(x_i,\delta).$
As in Case~1, for each $i$ we have
\[
B_{n_i,\boldsymbol{f}_{k}}(x_i,\delta)
\subseteq 
f_k^{-1}\Bigl(
   \bigcap_{t=0}^{n_i-2} f_{k+1}^{-t}
   \bigl(B_d(f_{k+1}^t(f_k(x_i)),\delta)\bigr)
 \Bigr)
=
f_k^{-1}\bigl(B_{n_i-1,\boldsymbol{f}_{k+1}}(f_k(x_i),\delta)\bigr),
\]
and hence
\[
f_k(Z)\subseteq\bigcup_i B_{n_i-1,\boldsymbol{f}_{k+1}}(f_k(x_i),\delta),
\qquad n_i-1\ge N-1.
\]
Then
\[
-\alpha(n_i-1)+S_{n_i-1}^{\boldsymbol{f}_{k+1}}\varphi(f_k(x_i))
\le
-\alpha n_i+S_{n_i}^{\boldsymbol{f}_{k}}\varphi(x_i)+\alpha+M,
\]
which implies
\[
e^{-\alpha(n_i-1)+S_{n_i-1}^{\boldsymbol{f}_{k+1}}\varphi(f_k(x_i))}
\le
e^{\alpha+M}\,e^{-\alpha n_i+S_{n_i}^{\boldsymbol{f}_{k}}\varphi(x_i)}.
\]
Thus,
\[
\mathcal M_{\boldsymbol{f}_{k+1}}(f_k(Z),\alpha,\varphi,\delta,N-1,0)
\le
e^{\alpha+M}
\mathcal M_{\boldsymbol{f}_{k}}(Z,\alpha,\varphi,\delta,N,0).
\]
Taking the infimum over $\mathcal F$ and then 
$\liminf_{N\to\infty}$ gives
\[
\underline{\mathfrak m}_{\boldsymbol{f}_{k}}(Z,\alpha,\varphi,\delta,0)
\ge
e^{-(\alpha+M)}\,
\underline{\mathfrak m}_{\boldsymbol{f}_{k+1}}(f_k(Z),\alpha,\varphi,\delta,0).
\]
Consequently,
\[
\underline{P}'(\boldsymbol{f}_{k},Z,\varphi,\delta,0)
\ge
\underline{P}'(\boldsymbol{f}_{k+1},f_k(Z),\varphi,\delta,0).
\]
Combining the two cases and letting $\delta\to0$ yields
\[
\underline{P}(\boldsymbol{f}_{k},Z,\varphi,\theta)
\ge
\underline{P}(\boldsymbol{f}_{k+1},f_k(Z),\varphi,\theta).
\]
This completes Part~I.

\medskip
\noindent\textbf{Part~II.}
We prove the reverse inequality
\[
\underline{P}\bigl(\boldsymbol{f}_{k},Z,\varphi,\theta\bigr)
\leq 
\underline{P}\bigl(\boldsymbol{f}_{k+1},f_k(Z),\varphi,\theta\bigr).
\]
Since $X$ is compact, there exist points $z_1,\ldots,z_L\in X$ such that
\[
X \subseteq \bigcup_{j=1}^{L} B_d\bigl(z_j,\delta\bigr).
\]
We first consider the case $\theta \in (0,1]$.
Fix \(\alpha \in \mathbb{R}\). Choose a family of Bowen balls
\(\mathcal F=\bigl\{B_{n_i,\boldsymbol{f}_{k+1}}(x_i,\delta)\bigr\}_i\)
with respect to \(\boldsymbol{f}_{k+1}\) such that
\[
f_k(Z)\subseteq\bigcup_i B_{n_i,\boldsymbol{f}_{k+1}}(x_i,\delta),
\qquad 
N\le n_i \le \frac{N}{\theta} \quad\text{for all }i.
\]
It follows that
\[
Z 
\subseteq f_k^{-1}\bigl(f_k(Z)\bigr) 
\subseteq
\bigcup_{j=1}^{L}\;\bigcup_{i}
\Bigl(
B_d(z_j,\delta)
\;\cap\;
\bigcap_{t=1}^{n_i}
   f_k^{-t}\bigl( B_d(f_{k+1}^{t-1}(x_i),\delta) \bigr)
\Bigr).
\]
For each \(i\) and each \(j\in\{1,\ldots,L\}\) for which
\[
B_d(z_j,\delta) \cap 
\bigcap_{t=1}^{n_i} f_k^{-t}
\bigl(B_d(f_{k+1}^{t-1}(x_i),\delta)\bigr)\neq\emptyset,
\]
choose a point \(y_{i,j}\) from this nonempty intersection.
For such $(i,j)$ we have
\[
B_d\bigl(z_j,\delta\bigr) \cap 
\bigcap_{t=1}^{n_i} f_k^{-t}\bigl(B_{d}(f_{k+1}^{t-1}(x_i),\delta)\bigr)
\subseteq B_{n_i+1,\boldsymbol{f}_k}(y_{i,j},2\delta).
\]
Hence the family $\{B_{n_i+1,\boldsymbol{f}_k}(y_{i,j},2\delta)\}_{(i,j)}$,
indexed over all pairs $(i,j)$ with non-empty intersection above, 
covers $Z$ with respect to $\boldsymbol{f}_k$. 
Moreover, for each such \((i,j)\) and every \(1\le t\le n_i\),
\[
d\bigl(f_k^t(y_{i,j}),f_{k+1}^{t-1}(x_i)\bigr)<\delta.
\]
Thus
\[
S_{n_i+1}^{\boldsymbol f_k}\varphi(y_{i,j})
\le
S_{n_i}^{\boldsymbol f_{k+1}}\varphi(x_i)
+n_i\omega(\delta)+M.
\]
Consequently,
\[
e^{-\alpha(n_i+1)+S_{n_i+1}^{\boldsymbol f_k}\varphi(y_{i,j})}
\le
e^{-\alpha+M}
e^{-(\alpha-\omega(\delta))n_i+
S_{n_i}^{\boldsymbol f_{k+1}}\varphi(x_i)}.
\]
Then
\begin{align*}
L\, e^{-\alpha+M}
   \sum_{i}
      e^{- (\alpha-\omega(\delta))n_i+S_{n_i}^{\boldsymbol{f}_{k+1}}\varphi(x_i)}
& \geq \sum_{j}
   \sum_{i}
      e^{-\alpha(n_i+1)+S_{n_i+1}^{\boldsymbol{f}_{k}}\varphi(y_{i,j})}\\[0.3em]
&\ge \mathcal{M}_{\boldsymbol{f}_{k}}(Z,\alpha,\varphi,2\delta,N+1,\theta).
\end{align*}
Taking the infimum over $\mathcal F$ and then $\liminf_{N\to\infty}$ gives
\[
\underline{\mathfrak{m}}_{\boldsymbol{f}_{k}}(Z,\alpha,\varphi,2\delta,\theta)\le 
L\, e^{-\alpha+M}\,
\underline{\mathfrak{m}}_{\boldsymbol{f}_{k+1}}(f_{k}(Z),\alpha-\omega(\delta),\varphi,\delta,\theta),
\]
which implies
\[
\underline{P}'\bigl(\boldsymbol{f}_{k},Z,\varphi,2\delta,\theta\bigr)
\leq 
\underline{P}'\bigl(\boldsymbol{f}_{k+1},f_k(Z),\varphi,\delta,\theta\bigr)+\omega(\delta).
\]
Letting $\delta\to0$ yields
\[
\underline{P}\bigl(\boldsymbol{f}_{k},Z,\varphi,\theta\bigr)
\leq 
\underline{P}\bigl(\boldsymbol{f}_{k+1},f_k(Z),\varphi,\theta\bigr).
\] 
The case $\theta=0$ follows from the same argument, noting that no upper bound on $n_i$ is required.

Combining Parts~I and~II completes the proof.
\end{proof}

\begin{corollary}\label{momo-theta-pressure}
For the notions of forward invariant, backward invariant and invariant subsets
of nonautonomous systems, we follow \cite[Corollary 3.6]{ju2026entropy}.
For $Z\subseteq X$, assume one of the following:
\begin{itemize}
\item $Z$ is $\boldsymbol f$-forward invariant: $f_k(Z)\subseteq Z$ for all $k \in \mathbb{N}$;
\item $Z$ is $\boldsymbol f$-backward invariant: $Z\subseteq f_k(Z)$ for all $k\in \mathbb{N}$;
\item $Z$ is $\boldsymbol f$-invariant: $f_k(Z)=Z$ for all $k\in \mathbb{N}$.
\end{itemize}
Then for any $\varphi\in C(X,\mathbb R)$, \(1 \le i < j < \infty\) and $\theta \in [0,1]$, the following hold:
\begin{enumerate}
\item[(1)] If $Z$ is forward invariant, then
\[
\underline P(\boldsymbol f_i,Z,\varphi,\theta)
\le 
\underline P(\boldsymbol f_j,Z,\varphi,\theta),
\qquad
\overline P(\boldsymbol f_i,Z,\varphi,\theta)
\le 
\overline P(\boldsymbol f_j,Z,\varphi,\theta).
\]

\item[(2)] If $Z$ is backward invariant, then
\[
\underline P(\boldsymbol f_i,Z,\varphi,\theta)
\ge 
\underline P(\boldsymbol f_j,Z,\varphi,\theta),
\qquad
\overline P(\boldsymbol f_i,Z,\varphi,\theta)
\ge 
\overline P(\boldsymbol f_j,Z,\varphi,\theta).
\]

\item[(3)] If $Z$ is invariant, then
\[
\underline P(\boldsymbol f_i,Z,\varphi,\theta)
=
\underline P(\boldsymbol f_j,Z,\varphi,\theta),
\qquad
\overline P(\boldsymbol f_i,Z,\varphi,\theta)
=
\overline P(\boldsymbol f_j,Z,\varphi,\theta).
\]
\end{enumerate}
\end{corollary}

\begin{proof}
By Proposition~\ref{le5.1-theta-refined}, for every $k\in\mathbb N$,
\[
\underline P(\boldsymbol f_k,Z,\varphi,\theta)
=
\underline P(\boldsymbol f_{k+1},f_k(Z),\varphi,\theta),
\]
and similarly for $\overline P$.
Iterating the above identity from $k=i$ to $j-1$, we obtain
\[
\underline P(\boldsymbol f_i,Z,\varphi,\theta)
=
\underline P(\boldsymbol f_j,f_i^{\,j-i}(Z),\varphi,\theta),
\]
and analogously for $\overline P$.
Forward (resp.\ backward) invariance gives
$f_i^{\,j-i}(Z)\subseteq Z$ (resp.\ $Z\subseteq f_i^{\,j-i}(Z)$),
implying the desired inequalities.  
Equalities hold when $Z$ is invariant.
\end{proof}

\begin{proposition}\label{commute-theta-pressure}
Let $f_1,f_2$ be continuous maps on a compact space $X$, and let $Z\subseteq X$.
Assume $Z$ is $\{f_1,f_2\}$-forward invariant or $\{f_1,f_2\}$-backward invariant.
Then for all $\varphi\in C(X,\mathbb R)$ and $\theta\in[0,1]$,
\[
\underline P(f_1\circ f_2,Z,\varphi+\varphi \circ f_2,\theta)
=\underline P(f_2 \circ f_1,Z,\varphi+\varphi \circ f_1,\theta),
\]
\[
\overline P(f_1 \circ f_2,Z,\varphi+\varphi \circ f_2,\theta)
=\overline P(f_2 \circ f_1,Z,\varphi+\varphi \circ f_1,\theta).
\]
\end{proposition}
\begin{proof}
Let
\[
\boldsymbol f=\{f_1,f_2,f_1,f_2,\ldots\}, \qquad
\boldsymbol g=\{f_2,f_1,f_2,f_1,\ldots\}.
\]
By Corollary~\ref{momo-theta-pressure},
\[
\underline P(\boldsymbol f,Z,\varphi,\theta)
=
\underline P(\boldsymbol g,Z,\varphi,\theta),
\]
and the same for $\overline P$.
Applying Proposition~\ref{pr-theta} with $m=2$ yields
\[
\underline P(f_1 \circ f_2,Z,\varphi+\varphi \circ f_2,\theta)
=
\underline P(f_2 \circ f_1,Z,\varphi+\varphi \circ f_1,\theta),
\]
and similarly for $\overline P$.
\end{proof}

\section{Topological conjugacy}\label{sec:factor}

Let $(X,\boldsymbol{f})$ and $(Y,\boldsymbol{g})$ be two NDSs, where $\boldsymbol{f}=\{f_i:X\to X\}_{i=1}^{\infty}$ and $\boldsymbol{g}=\{g_i:Y\to Y\}_{i=1}^{\infty}$ are sequences of continuous maps. 
A sequence of continuous surjective maps $\boldsymbol{\pi}=\{\pi_i:X\to Y\}_{i=1}^{\infty}$ is called a \emph{semiconjugacy} from $(X,\boldsymbol{f})$ to $(Y,\boldsymbol{g})$ if \[ \pi_{i+1}\circ f_i = g_i\circ \pi_i \qquad\text{for all } i\ge1. \] In this case, $(Y,\boldsymbol{g})$ is called a factor of $(X,\boldsymbol{f})$. Moreover, if each $\pi_i$ is a homeomorphism, then $\boldsymbol{\pi}$ is called a \emph{conjugacy} between the two systems; in this
situation, the inverse mappings $\pi_i^{-1}$ form a sequence
$
\boldsymbol{\pi}^{-1}=\{\pi_i^{-1}:Y\to X\}_{i=1}^{\infty},
$
which provides a semiconjugacy from $(Y,\boldsymbol{g})$ back to
$(X,\boldsymbol{f})$, and hence the systems are (topologically) conjugate.

In this section, we restrict ourselves to \emph{time-independent semiconjugacies}, 
that is, $\pi_i=\pi$ for all $i\ge1$. Then the semiconjugacy condition reduces to
$\pi\circ f_i = g_i\circ \pi$ for all $i\ge1.$
Then $\pi:X\to Y$ is called a factor map from 
$(X,\boldsymbol{f})$ to $(Y,\boldsymbol{g})$.

\begin{theorem}\label{con}
Let $(X,d)$ and $(Y,\rho)$ be compact metric spaces, and let 
$\boldsymbol{f}$ and $\boldsymbol{g}$ be sequences of continuous self-maps on $X$ and $Y$, respectively.  
Suppose $\pi$ is a factor map from $(X,\boldsymbol{f})$ to $(Y,\boldsymbol{g})$.  
Then for every subset $Z\subseteq X$, $\theta\in[0,1]$ and every $\varphi\in C(Y,\mathbb{R})$,
\[
\underline{P}(\boldsymbol{f},Z,\varphi \circ \pi,\theta)\;\ge\;\underline{P}(\boldsymbol{g},\pi(Z),\varphi, \theta),
\qquad
\overline{P}(\boldsymbol{f},Z,\varphi \circ \pi,\theta)\;\ge\;\overline{P}(\boldsymbol{g},\pi(Z),\varphi, \theta).
\]
If $\pi$ is a conjugacy, then equality holds:
\[
\underline{P}(\boldsymbol{f},Z,\varphi \circ \pi,\theta)\;=\;\underline{P}(\boldsymbol{g},\pi(Z),\varphi,\theta),
\qquad
\overline{P}(\boldsymbol{f},Z,\varphi \circ \pi,\theta)\;=\;\overline{P}(\boldsymbol{g},\pi(Z),\varphi,\theta).
\]
\end{theorem}

\begin{proof}
Since $\pi$ is continuous and $X$ is compact, $\pi$ is uniformly continuous.
Hence for any $\varepsilon>0$ there exists $\delta=\delta(\varepsilon)\in(0,\varepsilon]$ such that
\[
d(x,y)<\delta \quad\Longrightarrow\quad 
\rho(\pi(x),\pi(y))<\varepsilon.
\]
We first consider the case \(\theta\in(0,1]\).
Let 
$\mathcal G_{N,\boldsymbol{f}}
=\bigl\{\,B_{n_i,\boldsymbol{f}}(x_i,\delta)\,\bigr\}_i$
be a cover of \(Z\), where each \(B_{n_i,\boldsymbol{f}}(x_i,\delta)\) is the \((n_i,\delta)\)-Bowen ball 
with respect to \(\boldsymbol{f}\) and \(n_i\in[N,\,N/\theta]\).
Then for every 
\(B_{n_i,\boldsymbol{f}}(x_i,\delta)\in\mathcal G_{N,\boldsymbol{f}}\), we have
\[
B_{n_i,\boldsymbol{f}}(x_i,\delta)
=\bigcap_{p=0}^{n_i-1} f_1^{-p}\left(B_d(f_1^{p}(x_i),\delta)\right).
\]
For any \(y\in f_1^{-p}\left(B_d(f_1^{p}(x_i),\delta)\right)\) with \(0\le p\le n_i-1\),
we have \(d(f_1^{p}(x_i),f_1^{p}(y))<\delta\), and thus
\[
\rho\!\left(\pi(f_1^{p}(x_i)),\,\pi(f_1^{p}(y))\right)<\varepsilon.
\]
Therefore
\[
y\in(\pi\circ f_1^{p})^{-1}
   \left(B_{\rho}(\pi (f_1^{p}(x_i)),\varepsilon)\right)
   =(g_1^{p}\pi)^{-1}\left(B_{\rho}(g_1^{p}\pi(x_i),\varepsilon)\right).
\]
Consequently,
\[
B_{n_i,\boldsymbol{f}}(x_i,\delta)
\subseteq
\pi^{-1}\!\left(B_{n_i,\boldsymbol{g}}(\pi(x_i),\varepsilon)\right).
\]
Hence the family
\[
\mathcal H_{N,\boldsymbol{g}}
=\left\{
B_{n_i,\boldsymbol{g}}(\pi(x_i),\varepsilon):
B_{n_i,\boldsymbol{f}}(x_i,\delta)\in\mathcal G_{N,\boldsymbol{f}}
\right\}
\]
covers \(\pi(Z)\). 
Since $\pi(f_1^p(x_i))=g_1^p(\pi(x_i))$, we have for all $n_i$,
\[
S_{n_i}^{\boldsymbol{f}}(\varphi\circ\pi)(x_i)
=
\sum_{p=0}^{n_i-1}(\varphi\circ\pi)(f_1^p(x_i))
=
\sum_{p=0}^{n_i-1}\varphi(g_1^p(\pi(x_i)))
=
S_{n_i}^{\boldsymbol{g}}\varphi(\pi(x_i)).
\]
Furthermore, we obtain
\begin{align*}
\mathcal M_{\boldsymbol{f}}(Z,\alpha,\varphi\circ\pi,\delta,N,\theta)
&=\inf_{\mathcal{G}_{N,\boldsymbol{f}}}
  \sum_{B_{n_i,\boldsymbol{f}}(x_i,\delta)\in\mathcal{G}_{N,\boldsymbol{f}}}
  e^{-\alpha n_i + S_{n_i}^{\boldsymbol{f}}(\varphi\circ\pi)(x_i)} \\[0.6em]
&=\inf_{\mathcal{G}_{N,\boldsymbol{f}}}
  \sum_{B_{n_i,\boldsymbol{f}}(x_i,\delta)\in\mathcal{G}_{N,\boldsymbol{f}}}
  e^{-\alpha n_i + S_{n_i}^{\boldsymbol{g}}\varphi(\pi(x_i))} \\[0.6em]
&\ge
\inf_{\mathcal{H}_{N,\boldsymbol{g}}}
  \sum_{B_{n_i,\boldsymbol{g}}(\pi(x_i),\varepsilon)\in\mathcal{H}_{N,\boldsymbol{g}}}
  e^{-\alpha n_i + S_{n_i}^{\boldsymbol{g}}\varphi(\pi(x_i))} \\[0.6em]
&\ge
\mathcal M_{\boldsymbol{g}}(\pi(Z),\alpha,\varphi,\varepsilon,N,\theta).
\end{align*}
Taking $\liminf_{N\to\infty}$ in the above inequality yields
\[
\underline{\mathfrak{m}}_{\boldsymbol{f}}(Z,\alpha,\varphi\circ\pi,\delta,\theta)
\;\ge\;
\underline{\mathfrak{m}}_{\boldsymbol{g}}(\pi(Z),\alpha,\varphi,\varepsilon,\theta).
\]
Consequently, 
\[
\underline{P}'(\boldsymbol{f},Z,\varphi\circ\pi,\delta,\theta)
\;\ge\;
\underline{P}'(\boldsymbol{g},\pi(Z),\varphi,\varepsilon,\theta).
\]
Finally, letting $\varepsilon\to0$ gives
\[
\underline{P}(\boldsymbol{f},Z,\varphi\circ\pi,\theta)
\;\ge\;
\underline{P}(\boldsymbol{g},\pi(Z),\varphi,\theta).
\]
If $\pi$ is a homeomorphism,
the same argument applied to $\pi^{-1}$ yields the reverse inequality.
Therefore in this case we obtain equality:
\[
\underline{P}(\boldsymbol{f},Z,\varphi\circ\pi,\theta)
=
\underline{P}(\boldsymbol{g},\pi(Z),\varphi,\theta).
\]
The case \(\theta=0\) follows from the same argument, replacing the admissible condition
\(N\le n_i\le N/\theta\) by \(n_i\ge N\).

The proof of the assertion for the upper $\theta$-intermediate topological
pressure is entirely analogous.

\end{proof}

\begin{corollary}
If $g:X\to X$ is a homeomorphism commuting with $\boldsymbol{f}$ 
(i.e.\ $f_i\circ g=g\circ f_i$ for all $i\ge1$), then for any 
$Z\subseteq X$, $\varphi\in C(X,\mathbb{R})$, and $\theta\in[0,1]$, we have
\[
\underline{P}(\boldsymbol{f},Z,\varphi,\theta)
=
\underline{P}(\boldsymbol{f},g(Z),\varphi\circ g^{-1},\theta),
\qquad
\overline{P}(\boldsymbol{f},Z,\varphi,\theta)
=
\overline{P}(\boldsymbol{f},g(Z),\varphi\circ g^{-1},\theta).
\]
\end{corollary}

\begin{proof}
Since $f_i\circ g=g\circ f_i$ for all $i\ge1$, it follows that
\[
g\circ f_1^p = f_1^p\circ g, \qquad \forall\,p\ge0.
\]
Thus the constant sequence $\pi_i:=g$ defines a conjugacy of 
$(X,\boldsymbol{f})$ with itself, whose inverse is given by 
$\pi_i^{-1}=g^{-1}$.
Applying Theorem~\ref{con} completes the proof.
\end{proof}

Employing Bowen’s ideas and related developments
\cite{bowen1971entropy, kolyada1996topological, fang2012dimensions,
oprocha2011dimensional, li2013corrigendum, zhao2017formula}, we establish
an inequality for the $\theta$-intermediate topological pressures under a
factor map in the next theorem.  To this end, we recall the notion of
topological sup-entropy introduced by Kolyada and Snoha
\cite{kolyada1996topological}.

Let $(X,d)$ be a compact metric space, $Z\subseteq X$ a nonempty subset, and
$\boldsymbol{f}=\{f_i\}_{i\ge1}$ a sequence of equicontinuous self-maps of $X$.
For each $n\ge1$, define
\[
d_n^*(x,y)=\sup_i\,\max_{0\le j<n} d(f_i^j(x),f_i^j(y)), \qquad x,y\in X.
\]
Since $\boldsymbol{f}$ is equicontinuous, $d_n^*$ is equivalent to $d$, and thus $(X,d_n^*)$ is also compact.

A subset $E^*\subseteq X$ is called $(n,\varepsilon)^*$-separated if 
$d_n^*(x,y)>\varepsilon$ for any distinct $x,y\in E^*$.
A subset $F^*\subseteq X$ is called an $(n,\varepsilon)^*$-spanning set of $Z$ if for every $x\in Z$ there exists $y\in F^*$ such that $d_n^*(x,y)\le\varepsilon$.
Let \(s_n^*(\boldsymbol{f};Z;\varepsilon)\) denote the maximal cardinality of an $(n,\varepsilon)^*$-separated set in $Z$, and
\(r_n^*(\boldsymbol{f};Z;\varepsilon)\) the minimal cardinality of an $(n,\varepsilon)^*$-spanning set in $Z$.
The \emph{topological sup-entropy} of $\boldsymbol{f}$ on $Z$ is then defined by
\[
H(\boldsymbol{f};Z)
=\lim_{\varepsilon\to0}\limsup_{n\to\infty}\frac{1}{n}\log r_n^*(\boldsymbol{f};Z;\varepsilon)
=\lim_{\varepsilon\to0}\limsup_{n\to\infty}\frac{1}{n}\log s_n^*(\boldsymbol{f};Z;\varepsilon).
\]

\begin{theorem}
Let \((X, d)\) and \((Y, \rho)\) be compact metric spaces, \(\boldsymbol{f}\) be a sequence of equicontinuous maps from \(X\) into itself, \(\boldsymbol{g}\) be a sequence of equicontinuous maps from \(Y\) into itself. If \(\pi\) is a semiconjugacy from $(X,\boldsymbol{f})$ to $(Y,\boldsymbol{g})$, then for every nonempty $Z\subseteq X$, $\theta\in[0,1]$ and every $\varphi\in C(Y,\mathbb{R})$, we have,
\[
\underline{P}(\boldsymbol{f},Z,\varphi\circ\pi,\theta)
\le
\underline{P}(\boldsymbol{g},\pi(Z),\varphi,\theta)
+
\sup_{y \in Y} H\bigl(\boldsymbol{f}; \pi^{-1}(y)\bigr),
\]
\[
\overline{P}(\boldsymbol{f},Z,\varphi\circ\pi,\theta)
\le
\overline{P}(\boldsymbol{g},\pi(Z),\varphi,\theta)
+
\sup_{y \in Y} H\bigl(\boldsymbol{f}; \pi^{-1}(y)\bigr).
\]
\end{theorem}
\begin{proof}
Let \(a=\sup_{y \in Y} H\bigl(\boldsymbol{f}; \pi^{-1}(y)\bigr)\). If \(a= \infty\) there is nothing to prove. So we can assume that \(a< \infty\).
Fix \(\epsilon > 0\) and \(\tau > 0\). 
For the potential \(\varphi\in C(Y,\mathbb{R})\) and \(c>0\), we set
\[
\omega_\varphi(t)
:=\sup\{\lvert \varphi(y)-\varphi(y')\rvert : \rho(y,y')\le t\}.
\]
Since \(\varphi\circ\pi\) is a continuous function on \(X\), we also define
\[
\omega_{\varphi\circ\pi}(t)
:=\sup\{\lvert (\varphi\circ\pi)(x)-(\varphi\circ\pi)(x')\rvert : d(x,x')\le t\}.
\]
For each \( y \in Y \), choose \( m(y) \in \mathbb{N} \) such that
\[
a + \tau \geq  H\bigl(\boldsymbol{f}; \pi^{-1}(y); \epsilon\bigr) + \tau \geq \frac{1}{m(y)} \log r_{m(y)}^*\bigl(\boldsymbol{f}; \pi^{-1}(y); \epsilon\bigr),
\]
where 
\[
H\bigl(\boldsymbol{f}; \pi^{-1}(y); \epsilon\bigr)=\limsup_{n\to\infty}\frac{1}{n}\log r_n^*\bigl(\boldsymbol{f};\pi^{-1}(y);\epsilon\bigr).
\]
Let \( E_y^* \) be a \((m(y), \epsilon)^*\)-spanning set of \(\pi^{-1}(y)\) with respect to \(\boldsymbol{f}\), satisfying 
\[
\#\bigl(E_y^*\bigr) = r_{m(y)}^*\left(\boldsymbol{f}; \pi^{-1}(y); \epsilon\right).
\]
Define
\[
U_y= \left\{ u \in X : \exists z \in E_y^* \text{ such that } d_{m(y)}^{*}(u, z) < 2\epsilon \right\}.
\]
Then \( U_y \) is an open neighborhood of \(\pi^{-1}(y)\) and 
\[
\left(X \setminus  U_y\right) \cap \bigcap_{\gamma > 0} \pi^{-1}(\overline{B_\gamma(y)}) = \emptyset,
\]
where \( B_\gamma(y) = \{ y' \in Y : \rho(y', y) < \gamma \} \). By the finite intersection property of compact sets, there exists \( W_y = B_{\gamma(y)}(y) \) for which \( U_y \supseteq \pi^{-1}(W_y) \).

Since \( Y \) is compact, there exist \( y_1, \dots, y_p \) such that \( \{W_{y_1}, \dots, W_{y_p}\} \) cover \( Y \). Let \( \delta_1 > 0 \) be a Lebesgue number of the open cover \( \{W_{y_1}, \dots, W_{y_p}\} \) with respect to \(\rho\), and set \(0< \delta< \delta_1 / 2 \), \( M=\max_{1\le t\le p} m(y_t)\).

Now, for \( y \in Y \) and \( m \in \mathbb{N} \), by the claim in \cite[Theorem 4.6]{ju2026entropy}, there exist \( \ell(y) > 0 \) and \( v_1(y), \dots, v_{\ell(y)}(y) \in X \) such that

\[
\ell(y) \leq e^{(a + \tau)(m + M)} \quad \text{and} \quad \bigcup_{i=1}^{\ell(y)} B_{m,\boldsymbol f}(v_i(y), 4\epsilon) \supseteq \pi^{-1}(B_{m,\boldsymbol g}(y, \delta)),
\]
By discarding those indices \(i\) for which
\[
B_{m,\boldsymbol f}(v_i(y),4\epsilon)\cap 
\pi^{-1}\bigl(B_{m,\boldsymbol g}(y,\delta)\bigr)=\emptyset,
\]
we may assume that for all \(1\le i\le \ell(y)\),
\[
\pi\left(B_{m,\boldsymbol f}(v_i(y),4\epsilon)\right)
\cap B_{m,\boldsymbol g}(y,\delta)\neq \emptyset.
\]

For any \(n \in \mathbb{N}\) and sufficiently small \(\delta>0\), we let \(\left\{B_{n_j,\boldsymbol{g}}(w_j, \delta) \right\}_{j=1}^\infty\) be a cover of \( \pi(Z) \) such that for all $j$,
$n \le n_j \le n/\theta$ if $\theta\in(0,1]$,
and $n_j \ge n$ if $\theta=0$.
By the claim, for each \( B_{n_j,\boldsymbol{g}}(w_j, \delta) \), we have
\begin{itemize}
    \item  \( \ell(w_j) \leq e^{(a + \tau)(n_j + M)} \);
    \item \( \bigcup_{i=1}^{\ell(w_j)} B_{n_j,\boldsymbol{f}}\left(v_i(w_j), 4\epsilon \right) \supseteq \pi^{-1}\left(B_{n_j,\boldsymbol{g}}(w_j, \delta)\right) \);
    \item \( \pi\left(B_{n_j,\boldsymbol{f}}(v_i(w_j), 4\epsilon)\right) \cap B_{n_j,\boldsymbol{g}}(w_j, \delta) \neq \emptyset \) for any \( 1 \leq i \leq \ell(w_j) \).
\end{itemize}
This implies that
\[
\bigcup_{j=1}^\infty \bigcup_{i=1}^{\ell(w_j)} B_{n_j,\boldsymbol{f}}\left(v_i(w_j), 4\epsilon \right) \supseteq \bigcup_{j=1}^\infty \pi^{-1}\left(B_{n_j,\boldsymbol{g}}(w_j, \delta)\right) \supseteq \pi^{-1}(\pi(Z)) \supseteq Z.
\]
Then for any \(\alpha \in \mathbb{R}\), we have
\begin{align*}
\mathcal M_{\boldsymbol f}\left(Z,\alpha,\varphi\!\circ\!\pi,4\epsilon,n,\theta\right)
&\le 
\sum_{j=1}^{\infty}\sum_{i=1}^{\ell(w_j)}
   \exp\left(
      -\alpha n_j 
      + S_{n_j}^{\boldsymbol f}(\varphi \circ \pi)(v_i(w_j))
   \right) \\[0.6em]
&\le 
\sum_{j=1}^{\infty}\ell(w_j)\,
   \exp\left(
      -\alpha n_j
      + S_{n_j}^{\boldsymbol g}\varphi(w_j)
      + n_j\,\omega_{\delta,\epsilon}(\varphi)
   \right) \\[0.6em]
&\le 
e^{(a+\tau)M}
\sum_{j=1}^{\infty}
   \exp\left(
      -n_j\!\left(
         \alpha - a - \tau - \omega_{\delta,\epsilon}(\varphi)
      \right)
      + S_{n_j}^{\boldsymbol g}\varphi(w_j)
   \right),
\end{align*}
where
$\omega_{\delta,\varepsilon}(\varphi)=\omega_{\varphi}(\delta)
+\omega_{\varphi\circ\pi}(4\varepsilon).
$
Since the above inequality holds for any cover 
$\{B_{n_j,\boldsymbol g}(w_j,\delta)\}_{j=1}^\infty$ of $\pi(Z)$
satisfying the admissible condition in the definition of the 
$\theta$-intermediate pressure, we obtain
\[
\mathcal M_{\boldsymbol f}(Z,\alpha,\varphi\circ\pi,4\epsilon,n,\theta)
\;\le\;
e^{(a+\tau)M}\,
\mathcal M_{\boldsymbol g}\left(
\pi(Z),
\alpha-(a+\tau)-\omega_{\delta,\epsilon}(\varphi),
\varphi,\delta,n,\theta
\right).
\]
Taking the $\limsup$ as \(n\to\infty\) yields
\[
\overline{\mathfrak{m}}_{\boldsymbol f}(Z,\alpha,\varphi\circ\pi,4\epsilon,\theta)
\;\le\;
e^{(a+\tau)M}\,
\overline{\mathfrak{m}}_{\boldsymbol g}\left(
\pi(Z),
\alpha-(a+\tau)-\omega_{\delta,\epsilon}(\varphi),
\varphi,\delta,\theta
\right).
\]
Hence
\[
\overline P'(\boldsymbol f,Z,\varphi\circ\pi,4\epsilon,\theta)
\;\le\;
\overline P'(\boldsymbol g,\pi(Z),\varphi,\delta,\theta)
+a+\tau+\omega_{\delta,\epsilon}(\varphi).
\]
Letting first \(\epsilon\to0\) and \(\delta\to0\), and then letting \(\tau\to0\), we obtain
\[
\overline P(\boldsymbol f,Z,\varphi\circ\pi,\theta)
\;\le\;
\overline P(\boldsymbol g,\pi(Z),\varphi,\theta)+a.
\]
The argument for the lower \(\theta\)-intermediate pressure is analogous and therefore
\[
\underline P(\boldsymbol f,Z,\varphi\circ\pi,\theta)
\;\le\;
\underline P(\boldsymbol g,\pi(Z),\varphi,\theta)+a.
\]
\end{proof}

\section{Variational principles of $\theta$-intermediate topological pressures}\label{sec:variational}

In \cite{zhong2024variational}, Zhong and Chen established variational
principles for the Pesin-Pitskel pressure and for the lower and upper capacity pressures on compact subsets for autonomous systems. These three quantities may be viewed as the extremal cases $\theta=0$ and
$\theta=1$ of $\theta$-intermediate pressures. In this section, we extend their approach to NDSs and obtain a unified variational principle for all $\theta\in[0,1]$. To this end, we first introduce the $\theta$-intermediate measure-theoretic
pressures and then establish their relation with the corresponding
topological pressures.

Let \( \mathcal{M}(X) \) denote the set of all Borel probability measures on \(X\).
Given \( \mu\in\mathcal M(X) \), we call a family 
\(\mathcal{S}\subset \bigcup_{\varepsilon>0}\bigcup_{n\ge1}\{B_n(x,\varepsilon):x\in X\}\)
a \emph{\(\mu\)-cover} of \(X\) if \(\mu(\bigcup_{B\in\mathcal S}B)=1\).

Following the definitions of $\theta$-intermediate topological pressures, we define the corresponding $\theta$-intermediate measure-theoretic pressures. Given \( N \in \mathbb{N} \), \( \alpha \in \mathbb{R} \), \( \varepsilon > 0 \), and \( \varphi \in C(X, \mathbb{R}) \), define
\[
M_{\mu}(\boldsymbol{f}, \alpha, \varphi, \varepsilon, N, \theta ) = \inf \left\{\sum_i \exp\biggl(-\alpha\,n_i+\sup_{y \in B_{n_i}(x_i, \varepsilon)}S_{n_i}\varphi(y)\biggr) \right\},
\]
where the infimum is taken over all finite \( \mu \)-covers \(\mathcal{S}=\{B_{n_i}(x_i, \varepsilon)\}_{i}\) such
that \(N \le n_i \le N/\theta \) if $\theta>0$, and \(n_i \ge N\) if $\theta=0$.

Let
\[
\underline{m}_{\mu}(\boldsymbol{f}, \alpha, \varphi, \varepsilon,\theta) = \liminf_{N \to \infty}M_{\mu}(\boldsymbol{f}, \alpha, \varphi, \varepsilon, N, \theta ),
\]
\[
\overline{m}_{\mu}(\boldsymbol{f}, \alpha, \varphi, \varepsilon,\theta) = \limsup_{N \to \infty}M_{\mu}(\boldsymbol{f}, \alpha, \varphi, \varepsilon, N, \theta ).
\]

Define the numbers
\[
\underline{P}_{\mu}(\boldsymbol{f},  \varphi, \varepsilon,\theta) =\inf\{\alpha: \underline{m}_{\mu}(\boldsymbol{f}, \alpha, \varphi, \varepsilon,\theta)=0\}
=\sup\{\alpha: \underline{m}_{\mu}(\boldsymbol{f}, \alpha, \varphi, \varepsilon,\theta)=\infty\},
\]
\[
\overline{P}_{\mu}(\boldsymbol{f}, \varphi, \varepsilon,\theta) =\inf\{\alpha: \overline{m}_{\mu}(\boldsymbol{f}, \alpha, \varphi, \varepsilon,\theta)=0\}
=\sup\{\alpha: \overline{m}_{\mu}(\boldsymbol{f}, \alpha, \varphi, \varepsilon,\theta)=\infty\}.
\]

\begin{definition}
We call the following quantities
\[
\underline{P}_{\mu}(\boldsymbol{f},  \varphi, \theta)=\lim_{ \varepsilon \to 0}\underline{P}_{\mu}(\boldsymbol{f},  \varphi, \varepsilon,\theta),\qquad
\overline{P}_{\mu}(\boldsymbol{f},  \varphi, \theta)=\lim_{ \varepsilon \to 0}\overline{P}_{\mu}(\boldsymbol{f},  \varphi, \varepsilon,\theta)
\]
the lower and upper $\theta$-intermediate measure-theoretic pressures of \(\varphi\) with respect to \(\boldsymbol{f}\). 
\end{definition}

\begin{remark}
If we restrict the admissible covers in the definition of 
\(M(Z,\alpha,\varphi,\delta,N,\theta)\) to be finite ones, we obtain modified quantities
\(M^{\mathrm{fin}}\), \(\underline{\mathfrak m}^{\mathrm{fin}}\) and 
\(\overline{\mathfrak m}^{\mathrm{fin}}\), and the corresponding critical values
\(\underline{P}^{\mathrm{fin}}\) and \(\overline{P}^{\mathrm{fin}}\).
Moreover, if \(Z\subseteq X\) is compact, then
\[
\underline{P}^{\mathrm{fin}}(\boldsymbol f,Z,\varphi,\theta)
=
\underline P(\boldsymbol f,Z,\varphi,\theta),
\qquad
\overline{P}^{\mathrm{fin}}(\boldsymbol f,Z,\varphi,\theta)
=
\overline P(\boldsymbol f,Z,\varphi,\theta).
\]
\end{remark}

\begin{proposition}
Let \( (X,d) \) be a compact metric space and \( \boldsymbol{f} \) be a sequence of continuous self-maps of \( X \).
Then for any \(\mu \in \mathcal{M}(X)\), \(\varphi \in C(X,\mathbb{R})\) and \(\theta\in [0,1]\), we have 
\[
\begin{aligned}
\underline{P}_{\mu}(\boldsymbol{f},  \varphi, \theta)
&= \inf\bigl\{\underline{P}^{\mathrm{fin}}(\boldsymbol{f},Z,\varphi,\theta): \mu(Z) = 1,~Z\subseteq X\bigr\}, \\
\overline{P}_{\mu}(\boldsymbol{f},  \varphi, \theta)
&= \inf\bigl\{\overline{P}^{\mathrm{fin}}(\boldsymbol{f},Z,\varphi,\theta) : \mu(Z) = 1,~Z\subseteq X \bigr\}.
\end{aligned}
\]    
\end{proposition}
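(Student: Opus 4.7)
I plan to prove the equality by establishing the two inequalities separately, consistently using the Bowen-ball formulation of the topological pressure from Proposition~\ref{prop:pressure_equivalence}. This puts $\underline{P}(\boldsymbol{f},Z,\varphi,\theta)$ and $\underline{P}_{\mu}(\boldsymbol{f},\varphi,\theta)$ on the same footing: both are critical exponents of weighted sums taken over covers by dynamical balls $B_n(x,\delta)$ with lengths in $[N,\,N/\theta+1)$, the only genuine difference being that the topological pressure permits countable covers of $Z$ whereas $\underline{P}_\mu$ uses finite $\mu$-covers of $X$. The argument for $\overline{P}_\mu$ is obtained from that for $\underline{P}_\mu$ by replacing $\liminf$ with $\limsup$ throughout, so I focus on the lower pressure.

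For the inequality $\underline{P}_{\mu}(\boldsymbol{f},\varphi,\theta)\le\underline{P}(\boldsymbol{f},Z,\varphi,\theta)$ with $Z$ compact and $\mu(Z)=1$, fix $\alpha>\underline{P}(\boldsymbol{f},Z,\varphi,\theta)$ and a small $\delta>0$ with $\alpha>\underline{P}(\boldsymbol{f},Z,\varphi,\delta,\theta)$. Then $\underline{m}(Z,\alpha,\varphi,\delta,\theta)=0$ supplies, for every $k\in\mathbb{N}$, an integer $N_k$ (with $N_k\to\infty$) and a cover $\mathcal{F}_{k}=\{B_{n_{i}^{k}}(x_{i}^{k},\delta)\}_{i}$ of $Z$ with $n_{i}^{k}\in[N_k,N_k/\theta+1)$ whose associated sum is less than $2^{-k}$. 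Compactness of $Z$ and openness of Bowen balls allow extraction of a finite subcover of $Z$; its union contains $Z$ and therefore has $\mu$-measure $1$, so it is a valid finite $\mu$-cover. The weight bound is inherited, yielding $M_{\mu}(\boldsymbol{f},\alpha,\varphi,\delta,N_k,\theta)<2^{-k}$, whence $\underline{m}_{\mu}(\boldsymbol{f},\alpha,\varphi,\delta,\theta)=0$ and $\underline{P}_{\mu}(\boldsymbol{f},\varphi,\delta,\theta)\le\alpha$. Sending $\delta\to 0$ concludes this direction.

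For the reverse inequality, given $\alpha>\underline{P}_{\mu}(\boldsymbol{f},\varphi,\theta)$, the plan is to build a single compact set of full $\mu$-measure witnessing $\underline{P}\le\alpha$. Fix $\delta_{\ell}\downarrow 0$ with $\alpha>\underline{P}_{\mu}(\boldsymbol{f},\varphi,\delta_{\ell},\theta)$ for every $\ell$. For each pair $(\ell,k)$, the vanishing of $\underline{m}_{\mu}$ produces an integer $N_{\ell,k}$, chosen so that $N_{\ell,k}\to\infty$ as $k\to\infty$, and a finite $\mu$-cover $\mathcal{S}_{\ell,k}$ by Bowen balls of radius $\delta_\ell$ with lengths in $[N_{\ell,k},N_{\ell,k}/\theta+1)$ and total weight less than $2^{-k}$. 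Write $U_{\ell,k}$ for the union of the balls in $\mathcal{S}_{\ell,k}$, so that $\mu(U_{\ell,k})=1$, and define the Borel set
\[
Z^{*}:=\bigcap_{\ell\ge 1}\bigcap_{k\ge 1}U_{\ell,k},
\]
which still satisfies $\mu(Z^{*})=1$. Because $Z^{*}\subseteq U_{\ell,k}$, the family $\mathcal{S}_{\ell,k}$ also covers $Z^{*}$, so $M(Z^{*},\alpha,\varphi,\delta_{\ell},N_{\ell,k},\theta)<2^{-k}$ and hence $\underline{P}(\boldsymbol{f},Z^{*},\varphi,\delta_{\ell},\theta)\le\alpha$ for each $\ell$. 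Letting $\ell\to\infty$ gives $\underline{P}(\boldsymbol{f},Z^{*},\varphi,\theta)\le\alpha$. Finally, I would replace $Z^{*}$ by its closure $\overline{Z^{*}}$, which remains a subset of the compact space $X$ and preserves full $\mu$-measure; by the closure invariance of Proposition~\ref{sc} the pressure bound is also preserved, producing the required compact witness.

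The principal technical obstacle is precisely the mismatch between the two kinds of covers. In the ``$\le$'' direction this is absorbed at once by the compactness of $Z$, which forces the extraction of a finite subcover. In the ``$\ge$'' direction the challenge is subtler: one must produce a \emph{single} exceptional set $Z^{*}$ of full $\mu$-measure that simultaneously inherits the near-optimal $\mu$-covers at every scale $\delta_{\ell}$, which is handled by the diagonal intersection above, but only after a careful choice of the witnesses $N_{\ell,k}$ ensuring $N_{\ell,k}\to\infty$ as $k\to\infty$, so that the defining $\liminf$ in $N$ on the restricted set $Z^{*}$ genuinely vanishes at each scale.
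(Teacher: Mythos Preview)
Your overall strategy coincides with the paper's: one inequality is routine, and for the other you construct a full-measure set as a countable intersection of the unions of near-optimal finite $\mu$-covers at a shrinking sequence of scales, then read off the pressure bound scale by scale. Your ``$\le$'' direction is in fact more careful than the paper's, which simply asserts that it ``follows from the definition'' without explaining how a countable cover of $Z$ is converted into a \emph{finite} $\mu$-cover; your extraction of a finite subcover via the compactness of $Z$ fills this in cleanly.

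The gap is in your final closure step. You invoke Proposition~\ref{sc} to pass from $Z^{*}$ to $\overline{Z^{*}}$, but that proposition is stated and proved only for $\theta\in(0,1]$: its argument needs the length window $[N,N/\theta+1)$ to be bounded so that only finitely many iterates $f_1^{j}$ require a common modulus of continuity, and this fails at $\theta=0$. Closure stability is genuinely \emph{false} for the Pesin--Pitskel pressure (a countable set may have zero pressure while its closure carries the full pressure of $X$), so the step cannot be rescued by extending Proposition~\ref{sc} to $\theta=0$. Observe that the paper's own witness $Z_\delta$ is only a $G_\delta$ set, never a compact one, and no closure is taken there; this suggests the infimum in the statement is intended to range over arbitrary subsets of full measure rather than compact ones, in which case your closure step is simply unnecessary and the remainder of your argument works uniformly for all $\theta\in[0,1]$.
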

\begin{proof}
We prove the identity for the upper $\theta$-intermediate pressure. 
The lower case can be proved analogously.

It follows from the definition that
\[
\overline{P}_{\mu}(\boldsymbol{f},\varphi,\theta)
\le
\inf\bigl\{\overline{P}^{\mathrm{fin}}(\boldsymbol{f},Z,\varphi,\theta):\mu(Z)=1,~Z\subseteq X\bigr\}.
\]
To show the opposite inequality, let 
\(a=\overline{P}_{\mu}(\boldsymbol{f},\varphi,\theta)\).
For any \(s>0\), there exists \(\delta>0\) such that for any \(\varepsilon\in(0,\delta)\), we have
\[
\limsup_{N\to\infty}
M_{\mu}(\boldsymbol{f},a+s,\varphi,\varepsilon,N,\theta)=0.
\]
For each \(m\ge2\), set \(\varepsilon_m=\delta/m\).
Then there exists \(k_m\in\mathbb N\) such that
\[
M_{\mu}(\boldsymbol{f},a+s,\varphi,\varepsilon_m,N,\theta)\le \frac12,
\qquad \forall N\ge k_m.
\]
Hence, for every \(N\ge k_m\), there exists a finite \(\mu\)-cover
$\mathcal S_{N}^{(m)}=\{B_{n_i}(x_i,\varepsilon_m)\}_i$
with
\[
N\le n_i\le \frac{N}{\theta}\quad(\theta\in(0,1]),
\qquad n_i\ge N\quad(\theta=0),
\]
such that
\[
\sum_i 
\exp\Bigl(
-(a+s)n_i+\sup_{y\in B_{n_i}(x_i,\varepsilon_m)} S_{n_i}\varphi(y)
\Bigr)
<1.
\]
Define
\[
Z_\delta
=
\bigcap_{m\ge2}\;\bigcap_{N\ge k_m}
\bigcup_i B_{n_i}(x_i,\varepsilon_m).
\]
Then \(\mu(Z_\delta)=1\) and for each \(m\),
\[
\limsup_{N\to\infty}
M^{\mathrm{fin}}(Z_\delta,a+s,\varphi,\varepsilon_m,N,\theta)\le1.
\]
Hence, for each \(m\),
\[
\overline{P}^{\mathrm{fin}}(\boldsymbol{f},Z_\delta,\varphi,\varepsilon_m,\theta)
\le a+s.
\]
Letting \(m\to\infty\), we obtain
\[
\overline{P}^{\mathrm{fin}}(\boldsymbol{f},Z_\delta,\varphi,\theta)
\le a+s.
\]
Letting \(s\to0\) yields the result.
\end{proof}

\begin{theorem}
Let \( (X,d) \) be a compact metric space and \( \boldsymbol{f} \) be a sequence of continuous self-maps of \( X \).
For any compact subset $Z\subseteq X$, $\varphi \in C(X,\mathbb{R})$ and $\theta\in [0,1]$, we have 
\[
\begin{aligned}
\underline{P}(\boldsymbol{f},Z,\varphi,\theta) &= \sup\bigl\{\underline{P}_{\mu}(\boldsymbol{f},  \varphi, \theta) : \mu \in \mathcal{M}(X), ~\mu(Z) = 1\bigr\}, \\
\overline{P}(\boldsymbol{f},Z,\varphi,\theta) &= \sup\bigl\{\overline{P}_{\mu}(\boldsymbol{f},  \varphi, \theta) :  \mu \in \mathcal{M}(X), ~\mu(Z) = 1\bigr\}.
\end{aligned}
\]
\end{theorem}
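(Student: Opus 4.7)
\noindent\emph{Plan of proof.} The theorem splits into two inequalities, and I present only the argument for $\overline{P}$; the case of $\underline{P}$ is strictly parallel.

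\smallskip

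\noindent\emph{Upper bound $\sup\le\overline{P}$.} For any $\mu\in\mathcal{M}(X)$ with $\mu(Z)=1$, the set $Z$ itself lies in the admissible family $\{Z'\subseteq X:\mu(Z')=1\}$, so the preceding proposition gives
\[
\overline{P}_{\mu}(\boldsymbol{f},\varphi,\theta)\le\overline{P}(\boldsymbol{f},Z,\varphi,\theta).
\]
Taking the supremum over such $\mu$ yields one direction.

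\smallskip

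\noindent\emph{Lower bound $\sup\ge\overline{P}$.} This is the substantive half, carried out by a Frostman-type mass distribution argument inside the Carath\'eodory--Pesin framework developed in Section~2. Fix $s<\overline{P}(\boldsymbol{f},Z,\varphi,\theta)$ and choose $\delta>0$ so small that $\overline{P}(\boldsymbol{f},Z,\varphi,\delta,\theta)>s$; then $\overline{m}(Z,s,\varphi,\delta,\theta)=\infty$. The key step is a Frostman lemma adapted to the intermediate framework: using the compactness of $Z$ together with a Hahn--Banach / min--max argument (Howroyd's variant of Frostman's lemma, transplanted to Bowen balls with the length constraint $n\in[N,N/\theta+1)$), I construct, for each sufficiently large $N$, a probability measure $\mu_{N}$ supported in $Z$ satisfying
\[
\mu_{N}\!\left(B_{n}(x,\tfrac{\delta}{2})\right)\le C\exp\!\left(-s\,n+\sup_{y\in B_{n}(x,\delta/2)}S_{n}\varphi(y)\right)
\]
for every $x\in X$ and every admissible $n\in[N,N/\theta+1)$, with $C>0$ independent of $N$. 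Passing to a weak-$\ast$ limit $\mu=\lim\mu_{N_{k}}$ along a subsequence, the compactness of $Z$ gives $\mu(Z)=1$, and the mass-decay bound above persists for every Bowen ball whose length lies in $[N,N/\theta+1)$ whenever $N$ is large. For any finite $\mu$-cover $\{B_{n_{i}}(x_{i},\delta/2)\}_{i}$ with $n_{i}\in[N,N/\theta+1)$,
\[
1=\mu\!\left(\bigcup_{i}B_{n_{i}}(x_{i},\tfrac{\delta}{2})\right)\le C\sum_{i}\exp\!\left(-s\,n_{i}+\sup_{y\in B_{n_{i}}(x_{i},\delta/2)}S_{n_{i}}\varphi(y)\right),
\]
so $M_{\mu}(\boldsymbol{f},s,\varphi,\delta/2,N,\theta)\ge 1/C$ for all large $N$, hence $\overline{m}_{\mu}(\boldsymbol{f},s,\varphi,\delta/2,\theta)\ge 1/C>0$, which yields $\overline{P}_{\mu}(\boldsymbol{f},\varphi,\delta/2,\theta)\ge s$. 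Sending $\delta\to 0$ along a diagonal sequence of such measures and extracting a further weak-$\ast$ limit produces $\mu$ with $\mu(Z)=1$ and $\overline{P}_{\mu}(\boldsymbol{f},\varphi,\theta)\ge s$. Letting $s\uparrow\overline{P}(\boldsymbol{f},Z,\varphi,\theta)$ completes the reverse inequality.

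\smallskip

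\noindent\emph{Main obstacle.} The technical heart of the argument is the Frostman construction in the intermediate regime. In the extremal cases $\theta=0$ (unconstrained lengths $n\ge N$) and $\theta=1$ (fixed length $n=N$) this construction is classical and available in the literature; for $\theta\in(0,1)$, however, the admissible lengths lie in the moving interval $[N,N/\theta+1)$, and one must arrange the Bowen-ball mass-decay bound \emph{uniformly} over this whole interval while simultaneously sending $N\to\infty$ and preserving the property $\mu(Z)=1$ under the weak-$\ast$ limit. This bookkeeping, together with the subsequent diagonal limit in $\delta$ and the monotonicity $\overline{P}_{\mu}(\boldsymbol{f},\varphi,\varepsilon,\theta)\uparrow\overline{P}_{\mu}(\boldsymbol{f},\varphi,\theta)$ as $\varepsilon\downarrow 0$, is where the proofs of the classical Pesin--Pitskel and capacity variational principles must be reorganised to fit the present $\theta$-intermediate framework.
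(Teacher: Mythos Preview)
Your upper bound is fine, but the substantive half of your proposal does not go through as written, and the paper takes a completely different---and much shorter---route.

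\textbf{The gap in your argument.} Your Frostman scheme produces, for each $N$, a measure $\mu_N$ supported in $Z$ satisfying the mass-decay bound only for lengths $n\in[N,N/\theta+1)$. When you pass to a weak-$\ast$ limit $\mu=\lim\mu_{N_k}$, the admissible intervals $[N_k,N_k/\theta+1)$ escape to infinity, so for any fixed $N$ the bound you need on $\mu(B_n(x,\delta/2))$ with $n\in[N,N/\theta+1)$ is not inherited from any $\mu_{N_k}$: once $N_k>N/\theta+1$ the estimate for $\mu_{N_k}$ says nothing about such balls. Weak-$\ast$ convergence gives upper semicontinuity on closed balls, not a transfer of Frostman bounds across different time-scales. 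The second diagonal limit in $\delta$ is even more fragile, since $\varepsilon\mapsto\overline P_\mu(\boldsymbol f,\varphi,\varepsilon,\theta)$ is not known to behave well under weak-$\ast$ convergence of $\mu$. You yourself flag this as the ``main obstacle''; the proposal does not resolve it.

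\textbf{What the paper does instead.} No Frostman lemma, no weak-$\ast$ limits. Since $Z$ is compact, choose a countable dense subset $E=\{y_1,y_2,\ldots\}\subseteq Z$ and set $\mu=\sum_i 2^{-i}\delta_{y_i}$, so $\mu(Z)=1$. Any \emph{finite} $\mu$-cover $\{B_{n_i}(x_i,\varepsilon)\}_{i=1}^{k_N}$ with $N\le n_i<N/\theta+1$ must contain $E$. Because the cover is finite, the lengths are bounded by some $L$, so the finitely many maps $f_1^0,\ldots,f_1^{L-1}$ are uniformly continuous; doubling the radius then makes $\{B_{n_i}(x_i,2\varepsilon)\}$ an honest cover of $Z$. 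Comparing sums at radii $\varepsilon$ and $2\varepsilon$ (with a shift of $\alpha$ by the modulus $\zeta(\varepsilon)$) gives $M(Z,\alpha+\zeta(\varepsilon),\varphi,2\varepsilon,N,\theta)\le M_\mu(\boldsymbol f,\alpha,\varphi,\varepsilon,N,\theta)$ for every $N$, hence $\overline P(\boldsymbol f,Z,\varphi,2\varepsilon,\theta)-\zeta(\varepsilon)\le \overline P_\mu(\boldsymbol f,\varphi,\varepsilon,\theta)$, and letting $\varepsilon\to0$ finishes. One explicit measure does the whole job; the finiteness built into the definition of a $\mu$-cover is exactly what makes the uniform-continuity step work uniformly in $\theta$.
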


\begin{proof}
We prove the statement for the upper $\theta$-intermediate pressure; the lower case is analogous.
The inequality
\[
\overline{P}(\boldsymbol{f},Z,\varphi,\theta)
\ge 
\sup\{\overline{P}_{\mu}(\boldsymbol{f},\varphi,\theta):\mu \in \mathcal{M}(X), ~\mu(Z)=1\}
\]
follows directly from the definition.

For the reverse inequality, we construct a measure on $X$ as follows.
Since $Z$ is compact, let $E=\{y_i\}_{i\in \mathcal{I}}$ be a countable dense subset of $Z$ and define a measure $\mu$ supported on $E$ by assigning positive weights $p_i>0$ to $y_i$ such that
$\sum_{i\in \mathcal{I}}p_i=1.$
For $\varepsilon>0$ and $N\in\mathbb N$, let
$\mathcal S^*=\{B_{n_i}(x_i,\varepsilon)\}_{i=1}^{k_N}$
be a finite $\mu$-cover with admissible lengths
\[
N\le n_i\le \frac{N}{\theta}\ (\theta\in(0,1]),\quad
n_i\ge N\ (\theta=0).
\]
Then $E\subset\bigcup_{B\in\mathcal S^*}B$.
Let $L=\max_i n_i$. By uniform continuity of the maps $f_1^j$ for $j=0,\dots,L-1$, there exists $0<\delta<\varepsilon$ such that
\[
d(x',x'')<\delta \Rightarrow d(f_1^j(x'),f_1^j(x''))<\varepsilon,\quad j=0,\dots,L-1.
\]
For each $x\in Z$, choose $y_x\in E$ with $d(x,y_x)<\delta$. Then there exists $i$ such that
\[
d(f_1^j(x),f_1^j(x_i))<2\varepsilon,\quad j=0,\dots,n_i-1,
\]
so that $x\in B_{n_i}(x_i,2\varepsilon)$.
Hence $\{B_{n_i}(x_i,2\varepsilon)\}_i$ is a finite cover of $Z$.
For any \(x\in X\) and \(n\in\mathbb N\),
\[
\sup_{y\in B_n(x,2\varepsilon)} S_n\varphi(y)
\le 
S_n\varphi(x)+n\omega(2\varepsilon)
\le 
\sup_{y\in B_n(x,\varepsilon)} S_n\varphi(y)+n\omega(2\varepsilon).
\]
It follows that
\[
M^{\mathrm{fin}}(Z,\alpha+\omega(2\varepsilon),\varphi,2\varepsilon,N,\theta)
\le
M_{\mu}(\boldsymbol{f},\alpha,\varphi,\varepsilon,N,\theta).
\]
Thus
\[
\overline{P}^{\mathrm{fin}}(\boldsymbol{f},Z,\varphi,2\varepsilon,\theta)-\omega(2\varepsilon)
\le
\overline{P}_{\mu}(\boldsymbol{f},\varphi,\varepsilon,\theta).
\]
Letting $\varepsilon\to0$ and using the equivalence 
\(\overline{P}^{\mathrm{fin}}=\overline{P}\) on compact subsets, we obtain
\[
\overline{P}(\boldsymbol{f},Z,\varphi,\theta)
\le
\overline{P}_{\mu}(\boldsymbol{f},\varphi,\theta).
\]
Thus the converse inequality follows.
\end{proof}

\section*{Acknowledgements}
The author is sincerely grateful to the anonymous referee for the valuable comments and suggestions that greatly improved the quality of this manuscript.

\section*{Declarations}

\section*{Funding}
This work was supported by the Science and Technology 
Research Program of Chongqing Municipal Education Commission (Grant No.KJQN202500802).

\section*{Competing Interests}

The author declares that there are no conflicts of interest regarding this paper.

\bibliography{references}

\end{document}